\def\set#1{\left\{ #1 \right\}}
\def\abs#1{\left| #1 \right|}
\def\p#1{\left( #1 \right)}
\def\Z{\mathbb{Z}}
\def\Q{\mathbb{Q}}
\def\F{\mathbb{F}}
\def\Gal{\operatorname{Gal}}
\def\GL{\operatorname{GL}}
\def\SL{\operatorname{SL}}
\def\PGL{\operatorname{PGL}}
\def\det{\operatorname{det}}
\def\tr{\operatorname{tr}}
\def\im{\operatorname{im}}
\def\Frob{\operatorname{Frob}}
\def\Mat{\operatorname{Mat}}
\def\rad{\operatorname{rad}}
\def\res{\operatorname{res}}
\def\kronecker#1#2{\p{\frac{#1}{#2}}}
\theoremstyle{plain}
\newtheorem{theorem}{Theorem}
\newtheorem{corollary}[theorem]{Corollary}
\newtheorem{lemma}[theorem]{Lemma}
\newtheorem{proposition}[theorem]{Proposition}
\theoremstyle{definition}
\theoremstyle{remark}
\newtheorem{remark}[theorem]{Remark}
\title{On the effective version of Serre's open image theorem}
\author[Mayle]{Jacob Mayle}
\address{Jacob Mayle, Department of Mathematics, Wake Forest University, Winston-Salem, NC 27109}
\email{maylej@wfu.edu}
\author[Wang]{Tian Wang}
\address{Tian Wang, Max Planck Institute for Mathematics, Vivatsgasse 7, 53111 Bonn, Germany}
\email{twang@mpim-bonn.mpg.de}
\date{\today}
\subjclass[2010]{Primary 11G05; Secondary 11F80.}
\begin{document}

\begin{abstract} Let $E/\Q$ be an elliptic curve without complex multiplication. By Serre's open image theorem, the mod $\ell$ Galois representation $\overline{\rho}_{E, \ell}$ of $E$ is surjective for each prime number $\ell$ that is  sufficiently large. Under the generalized Riemann hypothesis, we give an explicit upper bound on the largest  prime $\ell$, linear in the logarithm of the conductor of $E$,  such that $\overline{\rho}_{E, \ell}$ is nonsurjective.
\end{abstract}

\maketitle

\section{Introduction}

Let $E$ be an elliptic curve defined over $\Q$. For a prime number $\ell$, let $E[\ell]$ denote the $\ell$-torsion subgroup of $E(\overline{\Q})$ and let $T_\ell(E)$ denote the $\ell$-adic Tate module of $E$. Recall that $E[\ell]$ and $T_\ell(E)$ are free modules of rank two over $\F_\ell$ and $\Z_\ell$, respectively, where $\F_\ell$ denotes the finite field with $\ell$ elements and $\Z_\ell$ denotes the ring of $\ell$-adic integers. Fixing bases, we obtain the module isomorphisms
\[ E[\ell] \cong \F_\ell \oplus \F_\ell \quad \text{and} \quad T_\ell(E) \cong \Z_\ell \oplus \Z_\ell. \]
The absolute Galois group $\Gal(\overline{\Q}/\Q)$ acts coordinate-wise on elements of $E[\ell]$ and on $T_\ell(E)$. These actions respect the above isomorphisms, and give rise to the \textit{mod $\ell$ Galois representation} and \textit{$\ell$-adic Galois representation} of $E$, which are denoted respectively by
\[
\bar{\rho}_{E, \ell}\colon\Gal(\overline{\Q}/\Q) \longrightarrow \GL_2(\F_{\ell}) \quad \text{and} \quad
\rho_{E, \ell}\colon\Gal(\overline{\Q}/\Q) \longrightarrow \GL_2(\Z_{\ell}).
\]

The above Galois representations  carry a considerable amount of information about $E$. For instance, consider the reduction $E_p$ of the curve $E$ at a prime number $p$ that is distinct from $\ell$. The well-known N\'{e}ron--Ogg--Shafarevich criterion gives that $E$ has good reduction at $p$ if and only if $\rho_{E,\ell}$ is unramified at $p$. Further, if $E$ has good reduction at $p$, then $\tr \rho_{E, \ell}(\Frob_p) = a_p(E)$ and $\det \rho_{E, \ell}(\Frob_p) = p$, where $\Frob_p \in \Gal(\overline{\Q}/\Q)$ denotes a Frobenius automorphism associated with $p$ and $a_p(E)$ is defined by the equation $\#E_p(\F_p) = p + 1 - a_p(E)$.

Suppose from now on that $E$ is without complex multiplication, i.e., assume that the geometric endomorphism ring of $E$ is trivial. A celebrated theorem of Serre \cite{Se1972}, known as the open image theorem, establishes that if $\ell$ is sufficiently large, then $\bar{\rho}_{E,\ell}$ is surjective. Let $c(E)$ denote the least positive integer such that if $\ell > c(E)$, then $\bar{\rho}_{E,\ell}$ is surjective.\footnote{The choice to define $c(E)$ in terms of $\bar{\rho}_{E,\ell}$ instead of $\rho_{E,\ell}$ is somewhat arbitrary. Indeed, for a prime number $\ell \geq 5$,  we have that $\bar{\rho}_{E,\ell}$ is surjective if and only if $\rho_{E,\ell}$ is surjective \cite[p. IV-23]{Se1968}.} Serre asked if $c(E) \leq 37$ holds for each elliptic curve $E/\Q$ without complex multiplication. This is known as Serre's uniformity question and, following theoretical advances and vast numerical evidence, is now articulated in precise conjectures of Sutherland \cite[Conjecture 1.1]{Su2016} and Zywina \cite[Conjecture 1.12]{Zy2015a}. Mazur's landmark work on modular curves implies that $c(E) \leq 11$ if $E$ is semistable \cite[Theorem 4]{Ma1978}. Further progress toward resolving Serre's question has since been made by studying modular curves, which we discuss in \S \ref{sub-GL2}. In an adjacent direction, there has been progress in bounding $c(E)$ in terms of invariants of $E$, such as the Faltings height $h_E$ or the conductor $N_E$ of $E$, which we address in this paper.

Serre's original proof of the open image theorem is ineffective and does not give a bound on $c(E)$. The first unconditional bound on $c(E)$ is due to Masser and W\"{u}stholz \cite{MW1993} who proved in 1993 that there exist absolute constants $C_1$ and $\gamma$ such that
\[ c(E) \leq C_1 h_E^\gamma. \]
This bound has subsequently been improved and made explicit in \cite{Pe2001, Lo2015, LeF2016}. Under the now superfluous assumption that $E$ is modular \cite{BC2001}, in 1995 Kraus\cite{Kr1995} bounded $c(E)$ in terms of the conductor of $E$,
\[ c(E) \leq 68 \rad(N_E) (1 + \log \log \rad N_E )^{1/2}, \]
where $\rad n \coloneqq \prod_{p \mid n} p$ denotes the radical of an integer $n$. In 2005, Cojocaru \cite{Co2005} proved independently using a similar approach that 
\[ c(E) \leq \frac{4\sqrt{6}}{3}N_E\prod_{p\mid N_E }\left( 1+\frac{1}{p}\right)^{1/2}. \]
Recently,  Zywina \cite[Proposition 1.8 and Theorem 1.10]{Zy2015b} improved the bounds of Kraus and Cojocaru.

Considerably better bounds for $c(E)$ are known under the assumption of the generalized Riemann hypothesis (GRH) for Dedekind zeta functions.  Assuming GRH, Serre gave an elegant proof in 1981 \cite[Th\'{e}or\`{e}me 22]{Se1981} that there exists an absolute, computable constant $C_2$ such that
\begin{equation} c(E)\leq C_2(\log \rad N_E)(\log\log \rad 2N_E)^3. \label{SeBd} \end{equation}
Serre mentioned without proof in \cite[Note 632.6]{Se1986} that the ``$\log\log$'' term in (\ref{SeBd}) may be removed by employing an $\ell$-adic technique of Faltings \cite[\S 6, pp. 362-363]{Fa1983}. The technique and its extension is called the Faltings-Serre method (see \cite[Section 2]{MR3981316})  and is widely recognized for its important role in establishing various modularity results: such as the modularity of elliptic curves over (infinitely) many imaginary quadratic fields \cite{MR2600560, MR1253207, MR1213108, MR3946721, caraiani2023modularity}, the paramodularity of certain abelian surfaces with a trivial endomorphism ring \cite{MR3981316, MR4349242}, and the modularity of some Calabi-Yau threefolds \cite{MR3007681, MR4057530}. 

In 2014, Larson and Vaintrob \cite[Theorem 1]{LV2013} proved, without applying the Falting-Serre method, that under GRH, there exists an absolute constant $C_3$ such that
\begin{equation} c(E) \leq C_3 \log N_E. \label{LarsonVaintrob} \end{equation}
Not only is their bound linear in $\log N_E$, but it also holds over an arbitrary number field $K$ (with $C_3$ depending only on $K$). However, even over $\Q$, the bound \eqref{LarsonVaintrob} is ineffective in the sense that no method is presently known for computing $C_3$. In order to compute $C_3$ via the proof in \cite{LV2013}, one would need to understand the rational points on the modular curve $X_{ns}^+(\ell)$ for some prime $\ell \geq 53$.

The main result of this paper is an explicit conditional bound on $c(E)$ of the same asymptotic quality as (\ref{LarsonVaintrob}) for elliptic curves over $\Q$. Specifically, we shall prove the following.
\begin{theorem} \label{main-thrm} Assume GRH. If $E/\Q$ is an elliptic curve without complex multiplication, then
\[
c(E) \leq 964 \log \rad(2N_{E}) + 5760,
\]
where $\rad n \coloneqq \prod_{p \mid n} p$ denotes the radical of an integer $n$.
\end{theorem}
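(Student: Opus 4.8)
The plan is to proceed by contradiction. Fix a non-CM curve $E/\Q$ and a prime $\ell$ exceeding $964\log\rad(2N_E)+5760$ (so in particular $\ell>5760>37$), assume $\bar\rho_{E,\ell}$ is not surjective, and derive a contradiction. Since the determinant of $\bar\rho_{E,\ell}$ — the mod-$\ell$ cyclotomic character — is surjective onto $\F_\ell^{\times}$, Serre's classification of the subgroups of $\GL_2(\F_\ell)$ not containing $\SL_2(\F_\ell)$, as catalogued by Zywina, shows that the image $G$ of $\bar\rho_{E,\ell}$ lies in one of: (i) a Borel subgroup; (ii) the normalizer of a split Cartan subgroup; (iii) the normalizer of a nonsplit Cartan subgroup; (iv) a subgroup whose image in $\PGL_2(\F_\ell)$ is $A_4$, $S_4$, or $A_5$. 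The two quantitative inputs used throughout are the GRH-conditional effective Chebotarev density theorem of Lagarias and Odlyzko, in the explicit form of Bach and Sorenson — for a finite Galois extension $L/\Q$ and a conjugacy-stable $\mathcal C\subseteq\Gal(L/\Q)$ of relative density $\delta>0$ there is a prime $p$ unramified in $L$, prime to $N_E$, with $\Frob_p\in\mathcal C$ and $p\le\bigl(a\log|d_L|+b[L:\Q]+c\bigr)^2$ for explicit absolute $a,b,c$ — and the Hasse bound $|a_p(E)|<2\sqrt p$. The feature that makes the final estimate \emph{linear} in $\log\rad(2N_E)$ rather than quadratic is that every auxiliary field $L$ below has bounded degree over $\Q$ and is ramified only at primes dividing $2\ell N_E$, so that $\log|d_L|=O(\log\ell+\log\rad(2N_E))$ with explicit constants.

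Case (i) is immediate: a reducible $\bar\rho_{E,\ell}$ means $E$ has a rational cyclic $\ell$-isogeny, so Mazur's theorem on rational isogenies of prime degree forces $\ell\le 163<5760$, a contradiction. For cases (ii)--(iv) I use the same device: produce a small prime $p$ of good reduction for which the shape of $G$ forces $\tr\bar\rho_{E,\ell}(\Frob_p)=0$, hence $a_p(E)\equiv 0\pmod\ell$, while simultaneously $a_p(E)\ne 0$; then $a_p(E)$ is a nonzero multiple of $\ell$, so $\ell\le|a_p(E)|<2\sqrt p$, which fails once $\ell>2\sqrt p$. In the Cartan-normalizer cases (ii) and (iii), let $N$ be the normalizer, $C$ the Cartan, and $\varepsilon\colon G_\Q\to N/C\cong\{\pm1\}$ the induced quadratic character, cutting out a quadratic field $F$; every element of $N\setminus C$ is anti-diagonal and so has trace $0$, whence $\Frob_p$ nontrivial in $\Gal(F/\Q)$ forces $a_p(E)\equiv 0\pmod\ell$. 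Here $F\ne\Q$: the image of complex conjugation under $\bar\rho_{E,\ell}$ is conjugate to $\operatorname{diag}(1,-1)$, which cannot lie in a nonsplit Cartan, while its lying in a split Cartan would make $\bar\rho_{E,\ell}$ reducible and send us back to case (i). Since $[F:\Q]=2$ and $F$ is ramified only at primes dividing $2\ell N_E$, one has $|d_F|\le 8\ell\,\rad(2N_E)$ crudely. In case (iv), the projective image $\bar G\in\{A_4,S_4,A_5\}$ has order at most $60$, so the field $L_0\subseteq\Q(E[\ell])$ fixed by $\bar\rho_{E,\ell}^{-1}(\text{scalars})$ has $[L_0:\Q]=|\bar G|\le 60$; each of $A_4,S_4,A_5$ contains a positive proportion of involutions, and a matrix whose image in $\PGL_2$ has order $2$ has eigenvalue ratio $-1$, hence trace $0$, so taking $\Frob_p$ in the set of order-$2$ elements of $\Gal(L_0/\Q)\cong\bar G$ again forces $a_p(E)\equiv 0\pmod\ell$; as $L_0$ has bounded degree and is ramified only at primes dividing $\ell N_E$ — its discriminant's $\ell$-adic valuation being $O(1)$ since $[L_0:\Q]$ is bounded — one gets $\log|d_{L_0}|=O(\log\ell+\log\rad(2N_E))$.

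To arrange $a_p(E)\ne 0$ at the same time, I would replace the auxiliary field $L$ by its compositum with $\Q(E[5])$ — still of bounded degree, ramified only at primes dividing $10\ell N_E$ — and additionally require $\Frob_p$ to have nonzero trace in $\bar\rho_{E,5}$, which forces $a_p(E)\not\equiv 0\pmod 5$ and hence $a_p(E)\ne 0$. Every subgroup of $\GL_2(\F_5)$ contains a proportion, bounded below by an absolute positive constant, of elements of nonzero trace; moreover $\Q(E[5])\cap\Q(E[\ell])$ has bounded degree over $\Q$ (being a subfield of $\Q(E[5])$), so that, using the non-CM hypothesis to see that the combined conjugacy-stable set in $\Gal(L\cdot\Q(E[5])/\Q)$ is nonempty, its relative density is bounded below by an explicit positive constant. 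Effective Chebotarev then furnishes a prime $p$ with $a_p(E)\equiv 0\pmod\ell$, $a_p(E)\ne 0$, and $p\le\bigl(c_1(\log\ell+\log\rad(2N_E))+c_2\bigr)^2$ for explicit $c_1,c_2$. Combining with $\ell\le|a_p(E)|<2\sqrt p$ gives $\ell\le 2c_1\log\ell+2c_1\log\rad(2N_E)+2c_2$; since $\ell-2c_1\log\ell\ge\ell/2$ once $\ell$ exceeds an explicit threshold, one deduces $\ell\le 4c_1\log\rad(2N_E)+\max(\text{threshold},4c_2)$. Optimizing the constants over cases (ii)--(iv) and absorbing Mazur's $163$ and the finitely many remaining small $\ell$ into the additive term yields $c(E)\le 964\log\rad(2N_E)+5760$. (One could instead dispose of some of cases (ii)--(iv) by invoking known unconditional instances of Serre's uniformity conjecture; GRH is genuinely essential only for the Cartan-normalizer case that remains open.)

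The step I expect to demand the most care — and hence the main obstacle — is the \emph{explicit} bookkeeping, rather than any single conceptual point. One must: pin down admissible constants in the GRH-conditional effective Chebotarev theorem for arbitrary conjugacy classes (not merely for splitting completely); bound $\log|d_L|$ for the bounded-degree fields $F$, $L_0$ and their composita with $\Q(E[5])$, controlling the ramification at $\ell$ and at $2$ via the conductor--discriminant formula; establish the explicit lower bound on the density of the combined Chebotarev class, which forces one to understand the possible entanglements of $\Q(E[\ell])$ with $\Q(E[5])$ and the possible images of $\bar\rho_{E,5}$; and finally optimize the threshold in $\ell-2c_1\log\ell\ge\ell/2$ together with the case-by-case constants so that the additive term comes out as small as $5760$. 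The representation-theoretic skeleton — the four-case reduction and the observation that each surviving case forces a trace-zero Frobenius — is clean; essentially all the work lies in making each step effective and then tuning the constants.
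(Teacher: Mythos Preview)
Your approach is genuinely different from the paper's, and while it could in principle yield some explicit linear bound in $\log\rad(2N_E)$, the assertion that it produces the constants $964$ and $5760$ is unsupported and almost certainly false for the method as you describe it. The paper does not treat cases (i)--(iv) in parallel. It first invokes the combined results of Mazur, Serre, Bilu--Parent--Rebolledo, and Balakrishnan et al.\ (its Theorem~\ref{SMBPR}) to reduce, for $\ell>37$, to the single case where $G_E(\ell)$ lies in $\mathcal{C}_{ns}^+(\ell)$ but not in $\mathcal{C}_{ns}(\ell)$. It then forms the quadratic character $\epsilon_\ell$ and the squarefree $D$ with $\epsilon_\ell=\chi_D$, and crucially uses Serre's lemma that $\epsilon_\ell$ is \emph{unramified at $\ell$}, so $D\mid N_E$: there is no $\ell$-dependence in the conductor of the twist $E_D$. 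The heart of the argument is then Theorem~\ref{thrm-qisog}, the effective isogeny theorem applied to the non-isogenous pair $E,E_D$: its proof runs the Faltings device (Proposition~\ref{prop-serre}) at the prime~$2$, so the auxiliary Galois extension lives inside $\Q((E\times E_D)[2^\infty])$, has degree at most $192$ (doubled to $384$ after adjoining $\sqrt{m}$), and is entirely independent of the nonsurjective prime~$\ell$. One obtains $p\le(482\log\rad(2N_E)+2880)^2$ with $a_p(E)\ne a_p(E_D)$; since $a_p(E_D)=\epsilon_\ell(\Frob_p)a_p(E)=\pm a_p(E)$, this single inequality simultaneously forces $\epsilon_\ell(\Frob_p)=-1$ (whence $\ell\mid a_p(E)$) and $a_p(E)\ne 0$, giving $\ell\le 2\sqrt p$ with no bootstrap and no need for $\Q(E[5])$.

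By contrast, you allow your auxiliary fields to ramify at~$\ell$, compose with $\Q(E[5])$ to force $a_p\ne 0$ (pushing the degree of the auxiliary field to $960$ or more), and then bootstrap through $\ell\le 2c_1\log\ell+\cdots$; each of these steps degrades the constants relative to the paper's route. Two further points deserve attention. First, your bound $|d_F|\le 8\ell\,\rad(2N_E)$ is needlessly weak: Serre's lemma already gives $F$ unramified at~$\ell$ when $\ell\nmid N_E$, which would eliminate the $\log\ell$ term and the bootstrap in the Cartan case. Second, the nonemptiness of your combined Chebotarev class is only asserted (``using the non-CM hypothesis''): you must actually check, for every possible image of $\bar\rho_{E,5}$ and every possible entanglement of $F$ with $\Q(E[5])$, that the coset corresponding to $\epsilon_\ell=-1$ contains an element of nonzero mod-$5$ trace. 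The paper sidesteps all of this by working $2$-adically with the twist, which is the idea you are missing.
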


Our proof of Theorem \ref{main-thrm} follows the strategy set forth by Serre in \cite[Th\'{e}or\`{e}me 22]{Se1981} and \cite[Note 632.6]{Se1986}. The key improvement comes from the following sharpening of his conditional bound  in the effective version of Faltings's isogeny theorem for elliptic curves \cite[Th\'{e}or\`{e}me 21]{Se1981}.

\begin{theorem} \label{thrm-qisog} Assume GRH. Let $E_1/\Q$ and $E_2/\Q$ be elliptic curves without complex multiplication. Suppose that $E_1$ and $E_2$ are not $\Q$-isogenous. Then there exists a prime number $p$ of good reduction for $E_1$ and $E_2$ such that $a_p(E_1) \neq a_p(E_2)$ satisfying the inequality
\[p\leq (482 \log \rad(2N_{E_1}N_{E_2}) +2880)^2.
\]
\end{theorem}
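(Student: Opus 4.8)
The plan is to argue by contradiction using an explicit form of the Chebotarev density theorem under GRH, with Faltings's isogeny theorem and the Hasse bound serving as the bridge between the statement ``$a_p(E_1)\neq a_p(E_2)$'' and a statement about Frobenius classes in a division field. Throughout I would work at the prime $\ell=2$: taking the smallest prime keeps the ramification above $\ell$ in the relevant division field --- and hence its contribution to the conductor entering the Chebotarev estimate --- as small as possible, which is why the linear term in the final bound comes out with a usable constant. Suppose then that $a_p(E_1)=a_p(E_2)$ for every prime $p$ of good reduction for $E_1E_2$ with $p\leq X$, where $X\coloneqq\p{482\log\rad(2N_{E_1}N_{E_2})+2880}^2$; the goal is to deduce that $E_1$ and $E_2$ are $\Q$-isogenous. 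Since $\det\rho_{E_1,2}=\det\rho_{E_2,2}$ is the $2$-adic cyclotomic character and each $V_2(E_i)\coloneqq T_2(E_i)\otimes\Q_2$ is a semisimple $\Q_2$-representation of $\Gal(\overline{\Q}/\Q)$, equality of the trace functions $\tr\rho_{E_1,2}$ and $\tr\rho_{E_2,2}$ on $\Gal(\overline{\Q}/\Q)$ would force $V_2(E_1)\cong V_2(E_2)$ and hence, by Faltings, the desired isogeny. So it suffices to exhibit a Frobenius at which the two trace functions disagree, attached to a prime that is not too large.

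By the Hasse bound $|a_p(E_i)|\leq 2\sqrt p$, so for a good prime $p\leq X$ the equality $a_p(E_1)=a_p(E_2)$ is equivalent to the congruence $a_p(E_1)\equiv a_p(E_2)\pmod{2^n}$ as soon as $2^n>4\sqrt X$. Let $n$ be the least positive integer with $\tr\bar\rho_{E_1,2^n}\neq\tr\bar\rho_{E_2,2^n}$ on $\Gal(\overline{\Q}/\Q)$ --- this exists by the previous paragraph --- put $M\coloneqq\Q(E_1[2^n],E_2[2^n])$, $G\coloneqq\Gal(M/\Q)$, and let $\psi\colon G\to\Z/2^n\Z$ be the conjugation-invariant function $\psi=\tr\bar\rho_{E_1,2^n}-\tr\bar\rho_{E_2,2^n}$, which is not identically zero. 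It then suffices to produce a prime $p\leq X$ of good reduction with $\Frob_p\in C\coloneqq\set{\sigma\in G:\psi(\sigma)\neq0}$, because for such $p$ one has $a_p(E_1)\not\equiv a_p(E_2)\pmod{2^n}$, hence $a_p(E_1)\neq a_p(E_2)$, contradicting the assumption. Such a $p$ will come from the effective Chebotarev density theorem applied to the pair $(M,G)$ and the conjugation-stable set $C$, provided one can bound three quantities by an absolute constant plus a term linear in $\log\rad(2N_{E_1}N_{E_2})$.

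Those three quantities, and how to control them, are the technical heart of the argument, and here I expect the main obstacle to lie. \emph{(i) The logarithmic root discriminant of $M$:} the field $M$ is unramified outside $2N_{E_1}N_{E_2}$; at an odd prime of bad reduction the conductor exponent of $\bar\rho_{E_i,2^n}$ is at most that of $E_i$ there, hence at most $2$; at $2$ it is bounded in terms of $n$ alone. Thus $\log d_M$ is $[M:\Q]$ times an absolute constant depending on $n$, plus at most $2[M:\Q]\log\rad(2N_{E_1}N_{E_2})$. \emph{(ii) The level $n$:} since $\tr\rho_{E_1,2}\equiv\tr\rho_{E_2,2}\pmod{2^{n-1}}$, the function $2^{-(n-1)}\psi$ takes values in $\F_2$ and records the direction in which the two mod-$2^n$ representations --- whose traces agree modulo $2^{n-1}$ --- first diverge; one must show $n$ is bounded by an absolute constant (rather than incurring the ``$\log\log$'' that a naive passage through several auxiliary primes would produce), and this is precisely the point at which a careful, explicit use of the $\ell$-adic technique of Faltings referred to by Serre is required. \emph{(iii) The density $|C|/|G|$:} one needs that a nonzero difference of traces of (reductions of) two-dimensional representations with equal determinant vanishes on at most a bounded proportion of the group, so that $|G|/|C|$ is an absolute constant and $\log(|G|/|C|)$ is absorbed into the additive term of the final estimate. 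I regard (ii) as the crux: quantitatively pinning down the smallest level at which two non-isogenous Tate modules become distinguishable by traces, uniformly over all $E_1,E_2$, is exactly what Serre's original argument handled wastefully.

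With (i)--(iii) secured, one applies an explicit form of the Chebotarev density theorem under GRH --- in the formulation of Lagarias and Odlyzko as made explicit by Serre, together with the subsequent numerical refinements --- to the pair $(M,G)$ and the set $C$, obtaining a prime $p$ of good reduction for $E_1E_2$, unramified in $M$, with $\Frob_p\in C$ and
\[
p\leq\p{482\log\rad(2N_{E_1}N_{E_2})+2880}^2.
\]
For this $p$ one has $a_p(E_1)\neq a_p(E_2)$, contradicting the standing assumption and proving the theorem. The explicit constants $482$ and $2880$ would then be read off by optimizing the interplay among the coefficients in the effective Chebotarev bound, the absolute bounds on $n$ and on $|G|/|C|$, and the conductor estimate of step (i).
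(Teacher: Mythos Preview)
Your overall strategy is the right one, and you have correctly located the crux at step (ii). But the resolution you propose there --- an absolute bound on the level $n$ at which the mod-$2^n$ trace functions first differ --- is not available, and this is a genuine gap. There is no uniform control, over all non-isogenous pairs $(E_1,E_2)$, on how deep a congruence $\tr\rho_{E_1,2}\equiv\tr\rho_{E_2,2}\pmod{2^m}$ can go; pairs of non-isogenous curves with highly congruent $2$-adic systems certainly exist, and proving a uniform bound on $n$ would in any case be a far stronger statement than the theorem itself. Since your field $M=\Q(E_1[2^n],E_2[2^n])$ has $[M:\Q]$ as large as $\lvert\GL_2(\Z/2^n\Z)\rvert^2$, both $[M:\Q]$ and $\log d_M$ --- and hence the Chebotarev output --- blow up with $n$, so your estimate in (i) does not yield a usable bound.

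The ``$\ell$-adic technique'' that the paper actually uses does not bound $n$ at all. One instead considers the $\Z_2$-subalgebra $A\subseteq\Mat_2(\Z_2)\times\Mat_2(\Z_2)$ generated by the image of $\rho_{E_1,2}\times\rho_{E_2,2}$; it is free of rank at most $8$, so $A/2A$ has at most $2^8$ elements. If $m$ is the largest integer with traces congruent modulo $2^m$, then $\lambda(x_1,x_2)=2^{-m}(\tr x_1-\tr x_2)$ is $\Z_2$-linear on $A$ and hence factors through $A/2A$. The image of $\Gal(\overline{\Q}/\Q)$ in $(A/2A)^\times$ modulo scalars is a group $G$ of order at most $255$ (refined to $192$ by divisibility by $\lvert\GL_2(\Z/2^n\Z)\rvert^2$), and the nonempty conjugation-stable set $C\subseteq G$ where $\bar\lambda\neq 0$ detects $\tr\rho_{E_1,2}(\gamma)\neq\tr\rho_{E_2,2}(\gamma)$. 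One then applies Bach--Sorenson's effective Chebotarev to the degree-$\leq 192$ subfield $K$ of $\Q((E_1\times E_2)[2^\infty])$ with $\Gal(K/\Q)=G$, adjoining $\sqrt{2N_{E_1}N_{E_2}}$ to force $p\nmid N_{E_1}N_{E_2}$; the constants $482$ and $2880$ then fall out of $[\tilde K:\Q]\leq 384$ together with Lemma~\ref{lem-logdK}. This is what replaces your (ii). Your concern (iii) is also superfluous in this framework: the Bach--Sorenson least-prime bound depends only on $\log d_K$ and $[K:\Q]$, not on $\lvert C\rvert/\lvert G\rvert$, so no density estimate for $C$ is needed.
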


The structure of our paper is as follows. In \S\ref{cdt-sec}, we give a variant of the effective Chebotarev density theorem due to Bach and Sorenson. We use this tool, together with a refinement of a  technique of Faltings, to prove Theorem \ref{thrm-qisog} in \S\ref{faltings-sec}. With this result in hand, in \S\ref{main-sec}, we follow in the footsteps of Serre's elegant proof of (\ref{SeBd}) to complete our proof of Theorem \ref{main-thrm}. Afterward, we illustrate our result with a numerical example in \S\ref{ex-sec}.

We conclude the introduction with some remarks on extensions of the effective Serre's open image theorem. 

We recall from the modularity theorem  \cite{Wi1995, TaWi1995} that Galois representations of elliptic curves over $\Q$ arise from Galois representations of weight 2 cuspidal eigenforms. Therefore, Theorem \ref{main-thrm} can also be interpreted as an effective open image theorem for modular forms of weight 2. We refer the reader to \cite{BiDi2014, Pe2022} for other effective results for higher weight modular forms.

Let $K$ be a number field and $E/K$ be a non-CM elliptic curve. Serre's open image theorem also applies over a number field, so we can define $c(E)$ in a similar way as before. It is known that there is a uniform bound of $c(E)$ for certain families of $\Q$-curves $E$ over a quadratic field $K$ (see \cite{LeF2016, Le2019}).  

 The main result of this paper is to make explicit the conditional bound of Larson and Vaintrob \cite[Theorem 1]{LV2013} for elliptic curves over $\Q$. However, as their bound holds for elliptic curves over number fields, 
it is natural to ask if we could extend our result to $E/K$. The approach that we follow relies on  Mazur's cyclic isogeny theorem. Generalizing Mazur's result to elliptic curves over arbitrary number fields appears to be challenging. Nonetheless,  assuming GRH, if $K$ is among a certain finite set of quadratic fields $K$,  Banwait \cite{Ba2022} and  Banwait, Najman, and Padurariu \cite{BaNaPa2022},  building on the earlier work of  David \cite{Da2011}, Larson and Vaintrob \cite{LaVa2014}, and  Momose\cite{Mo1995}, proved an analog of Mazur's cyclic isogeny theorem for $E/K$. 
Thus it is promising that one may be able to extend our work to give an explicit open image theorem for elliptic curves defined over these quadratic fields.

\section{Progress toward Serre's uniformity question} \label{sub-GL2}

The most significant progress toward a resolution of Serre's uniformity question comes from studying rational points on certain modular curves. By doing so, one limits the  possibilities for $G_E(\ell) \coloneqq \im \bar{\rho}_{E,\ell}$, i.e., the image of $\bar{\rho}_{E,\ell}$. As $G_E(\ell)$ is a subgroup of $\GL_2(\F_\ell)$, in order to describe what is known in this direction, we first state a well-known classification of subgroups of $\GL_2(\F_\ell)$ (which dates back to Dickson \cite{Di1901}) and give some necessary terminology.

Let $\ell$ be an odd prime number and fix a nonsquare element $\varepsilon \in \F_\ell^\times$. The \textit{split Cartan subgroup} and \textit{non-split Cartan subgroup} of $\GL_2(\F_\ell)$ are, respectively,
\[
\mathcal{C}_s(\ell) \coloneqq \set{\begin{pmatrix} a & 0 \\ 0 & d \end{pmatrix} : a,d \in \F_\ell^\times }
\quad \text{and} \quad
\mathcal{C}_{ns}(\ell) \coloneqq \set{\begin{pmatrix} a & \varepsilon c \\ c & a\end{pmatrix} : a,c \in \F_\ell \text{ and } (a,c) \neq (0,0)}.
\]
Let $\mathcal{C}_s^+(\ell)$ and $\mathcal{C}_{ns}^+(\ell)$ denote the normalizer of $\mathcal{C}_s(\ell)$ and $\mathcal{C}_{ns}(\ell)$, respectively. One may show that 
\begin{equation} \label{NormCartan}
\mathcal{C}^+_{s}(\ell) = \mathcal{C}_{s}(\ell) \cup \begin{pmatrix} 0 & 1 \\ 1 & 0 \end{pmatrix} \mathcal{C}_{s}(\ell)
\quad \text{and} \quad
\mathcal{C}^+_{ns}(\ell) = \mathcal{C}_{ns}(\ell) \cup \begin{pmatrix} 1 & 0 \\ 0 & -1 \end{pmatrix} \mathcal{C}_{ns}(\ell).
\end{equation}
Let $\mathcal{B}(\ell)$ denote the \textit{Borel subgroup} of $\GL_2(\F_\ell)$, i.e., the subgroup of upper triangular matrices,
\[
\mathcal{B}(\ell) \coloneqq \set{\begin{pmatrix} a & b \\ 0 & d \end{pmatrix} : a,d \in \F_\ell^\times \text{ and } b \in \F_\ell}.
\]
Let $A_n$ and $S_n$ denote the alternating and symmetric groups, respectively, on $n$ elements. Finally, for a subgroup $G$ of $\GL_2(\F_\ell)$, let $\overline{G}$ denote the image of $G$ in the projective linear group $\PGL_2(\F_\ell)$. With notation set, we now state the classification. For further details, we refer the reader to \cite[\S2]{Se1972}.
\begin{proposition} \label{Dickson} Let $\ell$ be an odd prime. If $G \subseteq \GL_2(\F_\ell)$ is a subgroup, then
\begin{enumerate}
    \item \label{SL2} $G$ contains $\SL_2(\F_\ell)$,
    \item \label{Borel} $G$ is conjugate to a subgroup of $\mathcal{B}(\ell)$,
    \item \label{Cartan} $G$ is conjugate to a subgroup of $\mathcal{C}_{ns}(\ell)$,
    \item \label{Ns} $G$ is conjugate to a subgroup of $\mathcal{C}_s^+(\ell)$ but not to any subgroup of $\mathcal{C}_s(\ell)$,
    \item \label{Nns} $G$ is conjugate to a subgroup of $\mathcal{C}_{ns}^+(\ell)$ but not to any subgroup of $\mathcal{C}_{ns}(\ell)$, or
    \item \label{Exp} $\overline{G}$ is isomorphic to $A_4$, $S_4$, or $A_5$.
\end{enumerate}
\end{proposition}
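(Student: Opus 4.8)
The plan is to pass to the projective image $\overline{G}\subseteq\PGL_2(\F_\ell)$, classify it according to whether or not $\ell$ divides $|G|$, and lift the resulting structure back to $\GL_2(\F_\ell)$. The dictionary I would use is: the stabilizer in $\GL_2(\F_\ell)$ of a line in $\F_\ell^2$ is a Borel subgroup conjugate to $\mathcal{B}(\ell)$; the stabilizer of an unordered pair of distinct lines is a conjugate of $\mathcal{C}_s^+(\ell)$ or $\mathcal{C}_{ns}^+(\ell)$, according to whether the two lines are $\F_\ell$-rational or conjugate over $\F_{\ell^2}$; its index-two subgroup fixing each line is the corresponding Cartan; and each of these named groups contains the scalars $\F_\ell^\times I$, so it is the full preimage in $\GL_2(\F_\ell)$ of its image in $\PGL_2(\F_\ell)$. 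The last point is what lets one upgrade a projective conclusion to the stated one in cases (\ref{Borel})--(\ref{Nns}).

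Suppose first that $\ell\mid|G|$, so $G$ contains a nontrivial unipotent element, which fixes a unique point of $\mathbb{P}^1(\F_\ell)$. If every element of order $\ell$ in $G$ fixes this same point, then $G$ normalizes the associated $\ell$-Sylow and lies in the Borel stabilizing that point, giving case~(\ref{Borel}). Otherwise $\overline{G}$ contains two distinct $\ell$-Sylow subgroups, and here I would appeal to the classical argument of Dickson (see \cite[\S2]{Se1972}): counting $\ell$-Sylows against the $\ell+1$ points of $\mathbb{P}^1(\F_\ell)$, together with the fact that $\mathrm{PSL}_2(\F_\ell)$ is generated by its unipotent subgroups, yields $\overline{G}\supseteq\mathrm{PSL}_2(\F_\ell)$. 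To descend, choose $H\leq G$ with $\overline{H}=\mathrm{PSL}_2(\F_\ell)$; then $[H,H]\subseteq[\GL_2(\F_\ell),\GL_2(\F_\ell)]\subseteq\SL_2(\F_\ell)$ while $[H,H]$ still surjects onto $[\mathrm{PSL}_2(\F_\ell),\mathrm{PSL}_2(\F_\ell)]=\mathrm{PSL}_2(\F_\ell)$, so for $\ell\geq 5$, where $\SL_2(\F_\ell)$ is perfect and hence has no proper subgroup surjecting onto $\mathrm{PSL}_2(\F_\ell)$, we get $G\supseteq[H,H]=\SL_2(\F_\ell)$, i.e.\ case~(\ref{SL2}); the prime $\ell=3$ I would dispatch by direct inspection of the subgroups of the order-$48$ group $\GL_2(\F_3)$.

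Suppose now that $\ell\nmid|G|$, so $\F_\ell^2$ is a semisimple $G$-module by Maschke's theorem. If it is reducible it decomposes into two lines, so $G$ is conjugate into $\mathcal{C}_s(\ell)\subseteq\mathcal{B}(\ell)$, again case~(\ref{Borel}). If it is irreducible and $G$ is abelian, the subalgebra of $\Mat_2(\F_\ell)$ generated by $G$ is commutative, semisimple, and acts irreducibly, hence is a field isomorphic to $\F_{\ell^2}$, so $G$ is conjugate into $\mathcal{C}_{ns}(\ell)\cong\F_{\ell^2}^\times$, case~(\ref{Cartan}). If it is irreducible and $G$ is nonabelian, consider the action of $\overline{G}$ on $\mathbb{P}^1(\overline{\F_\ell})$: it has no fixed point (else $G$ would be reducible or abelian), and every nontrivial element is semisimple with exactly two fixed points. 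If $\overline{G}$ has an orbit of size $2$, the index-two stabilizer of a point in it fixes both points, hence is cyclic and lies in a Cartan; as that pair is the fixed locus of an $\F_\ell$-rational involution, $\overline{G}$ is conjugate into $\mathcal{C}_s^+(\ell)$ or $\mathcal{C}_{ns}^+(\ell)$ accordingly and, being nonabelian, not into the Cartan itself, which is case~(\ref{Ns}) or~(\ref{Nns}). If $\overline{G}$ has no orbit of size $\leq 2$, then the classical orbit/class-equation count --- the same one classifying the finite rotation groups, valid here precisely because every nontrivial element is semisimple --- forces $\overline{G}\cong A_4$, $S_4$, or $A_5$, case~(\ref{Exp}). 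Lifting in cases (\ref{Borel})--(\ref{Nns}) via the preimage remark above then completes the argument.

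The step I expect to be the main obstacle is the assertion in the wild case that a subgroup of $\PGL_2(\F_\ell)$ of order divisible by $\ell$ that is not contained in a Borel must contain all of $\mathrm{PSL}_2(\F_\ell)$; this is where genuine use of the action on $\mathbb{P}^1(\F_\ell)$ is unavoidable, and the one place I would quote \cite[\S2]{Se1972} rather than reprove. A secondary point needing care is the bookkeeping of the small prime $\ell=3$ and of the Klein four-group (which must be viewed as a dihedral subgroup of a Cartan normalizer), so that the deliberately overlapping conclusions (\ref{SL2})--(\ref{Exp}) genuinely exhaust every subgroup.
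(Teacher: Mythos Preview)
The paper does not give its own proof of this proposition: it is stated as the classical Dickson classification, and the reader is simply referred to \cite[\S2]{Se1972} for details. Your proposal is a correct outline of exactly that classical argument --- splitting on whether $\ell\mid|G|$, passing to the projective image, and in the tame case recognizing $\overline{G}$ as cyclic, dihedral, or exceptional --- so there is nothing substantive to compare.

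One minor imprecision worth flagging: in the dihedral branch you justify Galois-stability of the size-$2$ orbit $\{P,Q\}$ by calling it ``the fixed locus of an $\F_\ell$-rational involution.'' The elements of $\overline{G}\setminus\overline{C}$ are involutions but they \emph{swap} $P$ and $Q$; the element whose fixed locus is $\{P,Q\}$ is any nontrivial element of the cyclic subgroup $\overline{C}$, and $\overline{C}$ need not contain an involution when $|\overline{C}|$ is odd. Replacing ``involution'' by ``nontrivial element of $\overline{C}$'' fixes this: such an element is $\F_\ell$-rational, its fixed pair is therefore Frobenius-stable, and the rest of your lift to $\mathcal{C}_s^+(\ell)$ or $\mathcal{C}_{ns}^+(\ell)$ goes through. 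With that tweak, and with your acknowledged citation to Serre for the wild-case step, the sketch is sound.
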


Returning to the world of elliptic curves, recall that by the Weil pairing on $E$, the composition
\[ \det \circ \bar{\rho}_{E,\ell}\colon \Gal(\overline{\Q}/\Q) \longrightarrow \F_\ell^\times \]
is the mod $\ell$ cyclotomic character and hence is surjective  \cite[III.8]{Si2009}. In particular, if $\SL_2(\F_\ell) \subseteq G_E(\ell)$, then in fact $G_E(\ell) = \GL_2(\F_\ell)$. Consequently, in order to prove that $\bar{\rho}_{E,\ell}$ is surjective for a particular prime number $\ell$, it suffices to rule out possibilities (\ref{Borel}) through (\ref{Exp}) of Proposition \ref{Dickson} for the group $G_E(\ell)$. For a non-CM elliptic curve, many cases are already ruled out for sufficiently large $\ell$. Indeed, Serre ruled out (\ref{Cartan})  for $\ell > 2$ \cite[\S 5.2]{Se1972} and (\ref{Exp}) for $\ell > 13$ \cite[Lemme 18]{Se1981}; Mazur ruled out (\ref{Borel}) for $\ell > 37$ \cite[Theorem 3]{Ma1978}; Bilu, Parent, and Rebolledo ruled out (\ref{Ns}) for $\ell > 7$ and $\ell\neq 13$ \cite[Corollary 1.2]{BP2013}; Balakrishnan, Dogra, Netan, M\"{u}ller, Tuitman, and Vonk ruled out  (\ref{Ns}) for $\ell=13$ \cite[Theorem 1.2]{MR3961086}. Therefore, all but (\ref{Nns}) are ruled out for each prime number $\ell > 37$, as recorded below.

\begin{theorem} \label{SMBPR} Let $E/\Q$ be an elliptic curve without complex multiplication. If $\ell$ is a prime number such that $\ell > 37$, then either $\bar{\rho}_{E,\ell}$ is surjective or $G_E(\ell)$ is conjugate to a subgroup of $\mathcal{C}_{ns}^+ (\ell)$ but is not conjugate to any subgroups of $\mathcal{C}_{ns}(\ell)$.
\end{theorem}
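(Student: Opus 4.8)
The plan is to obtain the result essentially by inspection from Dickson's classification of subgroups of $\GL_2(\F_\ell)$, namely Proposition \ref{Dickson} applied to $G = G_E(\ell)$, after using one intrinsic fact to collapse case (\ref{SL2}) into the surjectivity of $\bar{\rho}_{E,\ell}$ and then quoting the modular-curve and isogeny results recalled above to eliminate cases (\ref{Borel}), (\ref{Cartan}), (\ref{Ns}), and (\ref{Exp}) whenever $\ell > 37$. What remains is exactly the dichotomy ``$\bar{\rho}_{E,\ell}$ surjective, or we are in case (\ref{Nns})''.

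First I would record the one piece of intrinsic input. By the Weil pairing, $\det \circ \bar{\rho}_{E,\ell}$ is the mod $\ell$ cyclotomic character, so $\det G_E(\ell) = \F_\ell^\times$. Since $\ell > 37$ is odd, Proposition \ref{Dickson} applies to $G_E(\ell)$, and in case (\ref{SL2}) the determinant is decisive: if $\SL_2(\F_\ell) \subseteq G_E(\ell)$, then $\SL_2(\F_\ell) = G_E(\ell) \cap \SL_2(\F_\ell) = \ker(\det|_{G_E(\ell)})$, so $G_E(\ell)/\SL_2(\F_\ell) \cong \det G_E(\ell) = \F_\ell^\times$ and hence $\#G_E(\ell) = (\ell-1)\,\#\SL_2(\F_\ell) = \#\GL_2(\F_\ell)$; that is, $\bar{\rho}_{E,\ell}$ is surjective.

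Next I would eliminate the remaining cases other than (\ref{Nns}) one at a time, each by citing a known theorem under the standing hypotheses that $E/\Q$ is non-CM and $\ell > 37$: case (\ref{Borel}) (the image lies in a Borel, equivalently $E$ admits a rational $\ell$-isogeny) is excluded for all $\ell > 37$ by Mazur \cite[Theorem 3]{Ma1978}; case (\ref{Cartan}) (the image lies in a non-split Cartan) for all $\ell > 2$ by Serre \cite[\S 5.2]{Se1972}; case (\ref{Ns}) (the image lies in the normalizer of a split Cartan but not in the split Cartan itself) for all $\ell > 7$ with $\ell \neq 13$ by Bilu--Parent--Rebolledo \cite[Corollary 1.2]{BP2013} and for $\ell = 13$ by Balakrishnan--Dogra--M\"{u}ller--Tuitman--Vonk \cite[Theorem 1.2]{MR3961086}, hence for every $\ell > 37$; and case (\ref{Exp}) (the projective image is $A_4$, $S_4$, or $A_5$) for all $\ell > 13$ by Serre \cite[Lemme 18]{Se1981}. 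After discarding these, the only surviving alternatives for $\ell > 37$ are (\ref{SL2}) --- surjectivity --- and (\ref{Nns}), which is the asserted conclusion.

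Since all of the arithmetic depth is imported, there is no genuine technical obstacle in this argument; the only thing requiring care is bookkeeping --- confirming that the list in Proposition \ref{Dickson} is exhaustive for $G_E(\ell)$ and that the cited exclusions genuinely cover the full range $\ell > 37$, in particular that the edge cases ($\ell = 13$ for the split Cartan normalizer, $\ell \le 13$ for the exceptional subgroups) lie safely below our threshold. It is also worth noting that a subgroup conjugate into a full split Cartan is abelian and diagonalizable, hence already conjugate into a Borel, so such groups fall under case (\ref{Borel}) and need no separate treatment.
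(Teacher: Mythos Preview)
Your proposal is correct and follows essentially the same approach as the paper: the paper does not give a separate proof of this theorem but rather records it as a consequence of the discussion immediately preceding it, which uses the Weil pairing to reduce case (\ref{SL2}) to surjectivity and then cites Serre, Mazur, Bilu--Parent--Rebolledo, and Balakrishnan--Dogra--M\"{u}ller--Tuitman--Vonk to eliminate cases (\ref{Borel}), (\ref{Cartan}), (\ref{Ns}), and (\ref{Exp}) for $\ell > 37$. Your additional remark that a split Cartan subgroup already lies in a Borel is correct but unnecessary, since Proposition~\ref{Dickson} is stated so that the cases partition all subgroups without overlap needing to be resolved.
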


\section{The effective Chebotarev density theorem} \label{cdt-sec}

In this section, we offer a modest extension of the version of the effective Chebotarev density theorem  given in \cite{BS1996}. This extension will serve as a crucial tool in our proof of Theorem \ref{main-thrm}. First, let us fix some relevant notation.

Let $K$ be a number field with absolute discriminant $d_K$ and ring of integers $\mathcal{O}_K$. For a prime ideal $\mathfrak{p} \subseteq \mathcal{O}_K$, we write $\mathfrak{p} \mid p$ to indicate that $\mathfrak{p}$ lies above a prime number $p$. Let $v_{\mathfrak{p}}\colon K \to \Z$ denote the normalized $\mathfrak{p}$-adic valuation on $K$. We write $N(\mathfrak{p})$ for the ideal norm of $\mathfrak{p}$, which extends multiplicatively to arbitrary ideals of $\mathcal{O}_K$. One has that  
$ v_\mathfrak{p}(p)=e_{\mathfrak{p}}$  and  $N(\mathfrak{p})=p^{f_{\mathfrak{p}}}$,
where $e_{\mathfrak{p}}$ and $f_{\mathfrak{p}}$ are the ramification index and inertia degree of $\mathfrak{p}$, respectively.

Assume that $K/\Q$ is Galois. For a prime number $p$ that is unramified in $K/\Q$, write $(\tfrac{p}{K/\Q})$ to denote the Artin symbol of $K/\Q$ at $p$. Let $C$ be a subset of the Galois group $\Gal(K/\Q)$ that is closed under conjugation. Associated with $C$, consider the counting function
\[ \pi_{C}(x) \coloneqq \# \set{p \leq x : p \text{ is unramified in $K/\Q$ and } \! \p{\frac{p}{K/\Q}} \subseteq C}. \]
The Chebotarev density theorem states that
\begin{equation} \label{cdt}
    \pi_{C}(x) \sim \frac{\#{C}}{\#{\Gal(K/\Q)}} \pi(x),
\end{equation} 
where $\pi(x)$ is the prime counting function and $f \sim g $ means that $\lim_{x \to \infty} \frac{f(x)}{g(x)} = 1$. 

Assuming GRH, Lagarias and Odlyzko \cite{LO1975} gave an effective version of the Chebotarev density theorem that provides an error term in (\ref{cdt}).  Moreover, they showed that there exists an absolute constant $k$ such that the least prime number $p$ with
\begin{equation} \p{\frac{p}{K/\Q}} \subseteq C \label{artin-in-C} \end{equation}
satisfies the inequality $p \leq k (\log d_K)^2$. Oesterl\'{e} \cite{Oe1979} stated that the absolute constant $k$ may be taken to be $70$. Subsequently, Bach and Sorenson offered the following improvement.

\begin{theorem} \label{effective-cdt} Assume GRH. Let $K$ be a Galois number field and let $C \subseteq \Gal(K/\Q)$ be a nonempty subset that is closed under conjugation. Then there exists a  prime number $p$ that is unramified in $K/\Q$ for which (\ref{artin-in-C}) holds that satisfies the inequality
\[ p \leq  (a \log d_K + b [K:\Q] + c)^2, \]
where $a,b,$ and $c$ are absolute constants that may be taken to be $4, 2.5,$ and $5$, respectively, or may be taken to be the improved values given in \cite[Table 3]{BS1996} associated with $K$.
\end{theorem}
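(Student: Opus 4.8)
The plan is to obtain the theorem by a short translation argument from the Chebotarev-type bound of Bach and Sorenson \cite{BS1996}, which treats the case of a single conjugacy class. Recall that, under GRH, \cite{BS1996} provides for every conjugacy class $C_0 \subseteq \Gal(K/\Q)$ a prime ideal $\mathfrak{p}$ of $\mathcal{O}_K$ that is unramified over $\Q$, may be taken to have inertia degree $f_{\mathfrak{p}} = 1$, satisfies $\Frob_{\mathfrak{p}} \in C_0$, and obeys $N(\mathfrak{p}) \leq (a \log d_K + b[K:\Q] + c)^2$ with $a,b,c$ the constants named above (or the sharper values of \cite[Table 3]{BS1996}). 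What remains is to pass from a conjugation-closed set to one of its constituent conjugacy classes, and from a degree-one prime ideal back to the rational prime below it.

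First I would reduce to a single conjugacy class. Since $C$ is nonempty and closed under conjugation, it is a union of conjugacy classes; fix one, say $C_0$, so that $C_0 \subseteq C$. Applying the bound above to $C_0$ produces a prime ideal $\mathfrak{p} \mid p$ with $f_{\mathfrak{p}} = 1$, $\Frob_{\mathfrak{p}} \in C_0$, and $N(\mathfrak{p}) \leq (a \log d_K + b[K:\Q] + c)^2$. Because $\mathfrak{p}$ is unramified over $\Q$ and $K/\Q$ is Galois, $p$ is unramified in $K/\Q$; because $f_{\mathfrak{p}} = 1$, we have $p = N(\mathfrak{p})$, so $p$ satisfies the asserted inequality. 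Finally, the Artin symbol $(\tfrac{p}{K/\Q})$ is the conjugacy class of $\Frob_{\mathfrak{p}}$, which is $C_0$ and hence is contained in $C$; this is exactly condition (\ref{artin-in-C}), so the argument is complete.

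The analytic content lies entirely in \cite{BS1996} --- itself the culmination of the explicit, GRH-conditional Chebotarev estimates of \cite{LO1975} and \cite{Oe1979} --- so I do not anticipate a genuine obstacle in this section. The step deserving care is bookkeeping: one must verify that the cited result is stated, or can be restated, so that the prime ideal may be chosen of inertia degree one, so that the displayed bound on $N(\mathfrak{p})$ is literally a bound on a rational prime, and that the constants, including the refinements recorded in \cite[Table 3]{BS1996}, transfer without change to the conjugacy-class formulation. It is this checking, rather than any new analysis, that makes the statement a modest extension of \cite{BS1996} rather than a verbatim quotation.
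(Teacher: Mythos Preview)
Your proposal is correct and matches the paper's approach: the paper's proof is the bare citation ``See \cite[Theorem 5.1]{BS1996}'', and your argument spells out exactly the routine translation---from a single conjugacy class to a conjugation-closed set, and from a degree-one prime ideal to the rational prime below it---that justifies quoting that result in the stated form.
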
 
\begin{proof} See \cite[Theorem 5.1]{BS1996}. \end{proof}

For our application, we need an extension of Theorem \ref{effective-cdt} that allows for the avoidance of a prescribed set of primes. 

\begin{corollary}\label{ECDT-cor} Assume GRH. Let $K$ be a Galois number field, let $m$ be a squarefree positive integer, and set $\tilde{K} \coloneqq K(\sqrt{m})$. Let $C \subseteq \Gal(K/\Q)$ be a nonempty subset that is closed under conjugation. Then there exists a prime number $p$ not dividing $m$ that is unramified in $K/\Q$  for which (\ref{artin-in-C}) holds that satisfies  the inequality
\begin{equation} \label{ECDT-cor-ineq}
    p \leq  (\tilde{a} \log d_{\tilde{K}} + \tilde{b} [\tilde{K}:\Q] + \tilde{c})^2,
\end{equation} 
where  $\tilde{a},\tilde{b},\tilde{c}$ are absolute constants that may be taken to be $4, 2.5,$ and $5$, respectively, or may be taken to be the improved values given in \cite[Table 3]{BS1996} associated with $\tilde{K}$.
\end{corollary}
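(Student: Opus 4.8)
The plan is to apply Theorem~\ref{effective-cdt} directly to the field $\tilde{K} = K(\sqrt{m})$, using the ramification carried by $\sqrt{m}$ to force the resulting prime to avoid the divisors of $m$. First I would note that $\tilde{K}/\Q$ is Galois (it is the compositum of the Galois extensions $K/\Q$ and $\Q(\sqrt{m})/\Q$) and that the restriction homomorphism $r\colon \Gal(\tilde{K}/\Q) \to \Gal(K/\Q)$ is surjective. Set $\tilde{C} \coloneqq r^{-1}(C)$; since $r$ is a surjective homomorphism and $C$ is nonempty and closed under conjugation, the same properties pass to $\tilde{C}$.

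Next I would invoke Theorem~\ref{effective-cdt} with the pair $(\tilde{K}, \tilde{C})$. It produces a prime $p$, unramified in $\tilde{K}/\Q$, with $\p{\frac{p}{\tilde{K}/\Q}} \subseteq \tilde{C}$ and $p \leq (\tilde{a}\log d_{\tilde{K}} + \tilde{b}[\tilde{K}:\Q] + \tilde{c})^2$, which is precisely the bound \eqref{ECDT-cor-ineq}. It then remains to verify that $p$ has the three properties demanded by the corollary. Since $K \subseteq \tilde{K}$, the prime $p$ is unramified in $K/\Q$. Since restriction to $K$ sends Frobenius classes to Frobenius classes, it carries $\p{\frac{p}{\tilde{K}/\Q}}$ onto $\p{\frac{p}{K/\Q}}$, whence $\p{\frac{p}{K/\Q}} = r\p{\frac{p}{\tilde{K}/\Q}} \subseteq r(\tilde{C}) = C$, so \eqref{artin-in-C} holds. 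Finally, $p$ is unramified in the quadratic subfield $\Q(\sqrt{m})/\Q$, and since (taking $m$ squarefree, which is the case relevant to the applications) the discriminant of $\Q(\sqrt{m})$ is $m$ or $4m$, every prime dividing $m$ ramifies there; hence $p \nmid m$.

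I expect no serious obstacle: the argument is essentially a lifting-and-restricting bookkeeping exercise layered on top of Theorem~\ref{effective-cdt}. The one point needing attention is the last step — that unramifiedness of $p$ in $\tilde{K}/\Q$ already excludes the primes dividing $m$ — which is exactly what the choice $\tilde{K} = K(\sqrt{m})$ buys us, and is the reason the estimate \eqref{ECDT-cor-ineq} is phrased in terms of the (slightly larger) invariants $d_{\tilde{K}}$ and $[\tilde{K}:\Q]$ rather than $d_K$ and $[K:\Q]$.
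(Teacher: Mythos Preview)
Your argument is essentially the paper's own: lift $C$ along the restriction map to $\tilde{C} \subseteq \Gal(\tilde{K}/\Q)$, apply Theorem~\ref{effective-cdt} there, and restrict the resulting Frobenius back to $K$. The only cosmetic difference is that the paper splits into the cases $\tilde{K}=K$ and $\tilde{K}\neq K$ (unnecessarily, as your uniform treatment shows), and both you and the paper handle the ``$p\nmid m$'' clause only up to the squarefree part of $m$.
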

\begin{proof}
Notice that $\tilde{K}$ is Galois over $\Q$  since both $K$ and $\Q(\sqrt{m})$ are Galois over $\Q$. If $\tilde{K} = K$, then each prime number dividing $m$ is ramified in $K/\Q$, so Theorem \ref{effective-cdt} provides the desired result. Thus, we assume that $\tilde{K} \neq K$. Then $ K \cap \Q(\sqrt{m}) = \Q $, so by \cite[Theorem VI 1.14]{La2005}, we have that
\[ \Gal(\tilde{K}/\Q) \cong \Gal(K/\Q) \times (\Z/2\Z). \]

Let $\res\colon \Gal(\tilde{K}/\Q) \longrightarrow \Gal(K/\Q)$ denote the restriction map and consider the subset $\tilde{C} \coloneqq \res^{-1}(C)$ of $\Gal(\tilde{K}/\Q)$. Since $C$ is closed under conjugation in $\Gal(K/\Q)$, $\tilde{C}$ is closed under conjugation in $\Gal(\tilde{K}/\Q)$. By applying Theorem \ref{effective-cdt} to $\tilde{K}$ and $\tilde{C}$, we obtain a prime number $p$ that is unramified in $\tilde{K}$ for which (\ref{artin-in-C}) holds and satisfies the inequality (\ref{ECDT-cor-ineq}). Note that $\tilde{K}$ is ramified at the ramified primes of $K$ and at the prime divisors of $m$ (and possibly at 2). Thus $p$ is unramified in $K$ and does not divide $m$. Finally, because $\p{\tfrac{p}{\tilde{K}/\Q}} \subseteq \tilde{C}$ and $\res \p{\p{\tfrac{p}{\tilde{K}/\Q}}} = \p{\tfrac{p}{K/\Q}}$, we have that  $\p{\tfrac{p}{{K}/\Q}} \subseteq {C}$.
\end{proof}

In the corollary, we see that $p$ is bounded above in terms of $[\tilde{K}:\Q]$ and $\log d_{\tilde{K}}$. In our application, the degree $[\tilde{K}:\Q]$ will be absolutely bounded. Thus, it will remain to bound  $\log d_{\tilde{K}}$. For this purpose, we employ the following lemma, which can be found in \cite[Proposition 6]{Se1981}.

\begin{lemma} \label{lem-logdK} If $K/\Q$ is a nontrivial finite Galois extension, then
\[
(\tfrac{1}{2}\log 3) [K:\Q]\leq \log d_K \leq \p{[K:\Q] - 1} \log \rad(d_K) + [K:\Q] \log([K:\Q]).
\]
\end{lemma}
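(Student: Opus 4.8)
The plan is to prove the two inequalities separately, since they are independent and use different tools; only the upper bound will invoke the Galois hypothesis. For the lower bound, observe that the assertion is precisely $d_K \geq 3^{[K:\Q]/2}$. When $n \coloneqq [K:\Q] \geq 3$, I would deduce this from Minkowski's discriminant bound: starting from $\sqrt{d_K} \geq \frac{n^n}{n!}\p{\frac{\pi}{4}}^{r_2}$ and using $r_2 \leq n/2$ together with $\pi/4 < 1$, one obtains $\sqrt{d_K} \geq \frac{n^n}{n!}\p{\frac{\pi}{4}}^{n/2}$, so it suffices to verify the elementary inequality $\frac{n^n}{n!}\p{\frac{\pi}{4}}^{n/2} \geq 3^{n/4}$ for $n \geq 3$. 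This follows from a short monotonicity argument: the ratio of the $n$-th term of the sequence $\frac{n^n}{n!}(\pi/4)^{n/2}3^{-n/4}$ to its $(n-1)$-th term equals $\p{\frac{n}{n-1}}^{n-1}(\pi/4)^{1/2}3^{-1/4} \geq 2(\pi/4)^{1/2}3^{-1/4} > 1$, so the sequence increases, while a direct evaluation shows its $n = 3$ value already exceeds $1$. The remaining case $n = 2$ I would handle by hand: Minkowski's bound forces $d_K \geq \pi^2/4 > 2$, hence $d_K \geq 3 = 3^{2/2}$ (with equality exactly for $\Q(\sqrt{-3})$).

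For the upper bound, I would work prime by prime via the different $\mathfrak{d} \coloneqq \mathfrak{d}_{K/\Q}$, using $d_K = N_{K/\Q}(\mathfrak{d})$ so that $v_p(d_K) = \sum_{\mathfrak{p} \mid p} f_{\mathfrak{p}}\, v_{\mathfrak{p}}(\mathfrak{d})$ for each prime $p$. This is where Galois-ness is used: $\Gal(K/\Q)$ acts transitively on the primes above $p$, so $e_{\mathfrak{p}}$, $f_{\mathfrak{p}}$, and $v_{\mathfrak{p}}(\mathfrak{d})$ take common values $e_p, f_p, d_p$ over all $\mathfrak{p} \mid p$, with $e_p f_p g_p = n$; hence $v_p(d_K) = \frac{n}{e_p}d_p$. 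I would then feed in the standard two-sided estimate for the different exponent (see, e.g., Serre, \emph{Local Fields}, Ch.~III), namely $e_p - 1 \leq d_p \leq e_p - 1 + v_{\mathfrak{p}}(e_p) = e_p - 1 + e_p\, v_p(e_p)$, which yields
\[
v_p(d_K) - (n - 1) \;\leq\; \p{1 - \frac{n}{e_p}} + n\, v_p(e_p).
\]
Multiplying by $\log p$ and summing over the ramified primes, the terms $\p{1 - \frac{n}{e_p}}\log p$ are nonpositive (as $e_p \mid n$) and may be dropped, while $p^{v_p(e_p)} \mid e_p \mid n$ for every $p$ gives $\prod_p p^{v_p(e_p)} \mid n$; therefore
\[
\log d_K - (n - 1)\log \rad(d_K) \;=\; \sum_{p \mid d_K}\bigl(v_p(d_K) - (n - 1)\bigr)\log p \;\leq\; n \sum_{p \mid d_K} v_p(e_p)\log p \;\leq\; n \log n,
\]
which is the claimed bound after rearranging.

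I do not anticipate a real obstacle: the argument is essentially an assembly of standard ingredients, the two load-bearing ones being the sharp upper bound $d_p \leq e_p - 1 + v_{\mathfrak{p}}(e_p)$ on the different exponent and the elementary divisibility $\prod_p p^{v_p(e_p)} \mid n$. The only mild subtlety is that Minkowski's bound is a touch too weak at $n = 2$, which is why that case must be checked directly; everything else is bookkeeping.
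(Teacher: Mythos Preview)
Your proposal is correct and follows essentially the same approach as the paper: Minkowski's bound for the lower inequality (the paper simply cites it, whereas you spell out the monotonicity argument and the $n=2$ case), and for the upper inequality the different ideal together with the standard estimate $v_{\mathfrak{p}}(\mathfrak{d}) \leq e_{\mathfrak{p}} - 1 + v_{\mathfrak{p}}(e_{\mathfrak{p}})$ and the Galois fact $e_{\mathfrak{p}} \mid [K:\Q]$. The only cosmetic difference is that you exploit transitivity to write $v_p(d_K) = \frac{n}{e_p} d_p$ before estimating, while the paper keeps the sums $\sum_{\mathfrak{p}\mid p} f_{\mathfrak{p}}(\cdots)$ throughout; the substance is identical.
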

\begin{proof} The left-hand inequality follows from the Minkowski bound for the discriminant \cite[p.\ 139]{Se1981}. For the right-hand inequality, let $\mathfrak{D}_K \subseteq \mathcal{O}_K$ denote the different ideal of $K$ and note that
\[ d_K = N(\mathfrak{D}_K) = \prod_{p \mid d_K} p^{v_p(N(\mathfrak{D}_K))}. \]
By taking logarithms, we obtain
\begin{equation} \label{log-dK}
\log d_K = \sum_{p \mid d_K} v_p(N(\mathfrak{D}_K)) \log p = \sum_{p \mid d_K} \sum_{\mathfrak{p} \mid p} f_{\mathfrak{p}} v_{\mathfrak{p}}(\mathfrak{D}_K) \log p.
\end{equation}
For each prime ideal $\mathfrak{p} \subseteq \mathcal{O}_K$ lying above $p$, we have that
\[
    v_{\mathfrak{p}}(\mathfrak{D}_K)=e_{\mathfrak{p}}-1+s_{\mathfrak{p}}
\]
for some integer $s_{\mathfrak{p}}$ satisfying  $0\leq s_{\mathfrak{p}} \leq v_{\mathfrak{p}}(e_{\mathfrak{p}})$ (see, e.g., \cite[Theorem 2.6, p. 199]{Ne1999}). Thus 
\begin{equation} \label{sum-1}
\sum_{\mathfrak{p}\mid p} f_{\mathfrak{p}} v_{\mathfrak{p}}(\mathfrak{D}_K)
=\sum_{\mathfrak{p}\mid p} f_{\mathfrak{p}}(e_{\mathfrak{p}}-1)+
\sum_{\mathfrak{p}\mid p}f_{\mathfrak{p}}s_{\mathfrak{p}} 
\leq [K:\Q] - 1 + \sum_{\mathfrak{p} \mid p} f_{\mathfrak{p}} v_{\mathfrak{p}}(e_{\mathfrak{p}}).
\end{equation}
Since $K/\Q$ is Galois, $e_{\mathfrak{p}}$ divides $[K:\Q]$. Thus
$v_p(e_{\mathfrak{p}}) \leq v_p([K:\Q])$. Hence 
\begin{equation} \label{sum-2}
\sum_{\mathfrak{p} \mid p} f_{\mathfrak{p}} v_{\mathfrak{p}}(e_{\mathfrak{p}}) = \sum_{\mathfrak{p} \mid p} f_{\mathfrak{p}} e_{\mathfrak{p}} v_p(e_{\mathfrak{p}}) \leq v_p([K:\Q]) \sum_{\mathfrak{p} \mid p}  f_{\mathfrak{p}} e_{\mathfrak{p}} = v_p([K:\Q]) [K:\Q].
\end{equation}
Observe that
\begin{equation}\label{sum-3}
   \sum_{p \mid d_K}v_p([K:\Q]) [K:\Q]\log p\leq [K:\Q] \log [K:\Q].
\end{equation}

Applying  (\ref{sum-1}),  (\ref{sum-2}) and  (\ref{sum-3}) to (\ref{log-dK}), we obtain
\[
\log d_K \leq  ([K:\Q] - 1) \sum_{p \mid d_K} \log p + [K:\Q] \log [K:\Q].
\]
The claimed inequality now follows by noting that $\sum_{p \mid d_K} \log p = \log \rad (d_K)$.
\end{proof}

\section{An effective isogeny theorem for elliptic curves} \label{faltings-sec}

The objective of this section is to provide an improved conditional bound on the effective version of Faltings's isogeny theorem for elliptic curves. We begin in \S\ref{NOS-sec} with some preliminaries.

\subsection{Ramified primes} \label{NOS-sec}

Let $A/\Q$ be an abelian variety. For a positive integer $m$, let $\Q(A[m])$ denote the $m$-division field of $A$, i.e., the field obtained by adjoining to $\Q$ the coordinates of all points of $A(\overline{\Q})$ of order dividing $m$. For a prime number $\ell$, we write
\[ \Q(A[\ell^\infty]) \coloneqq \bigcup_{k = 1}^\infty \Q(A[\ell^k]). \]

We now recall Serre and Tate's extension of the criterion of N\'{e}ron--Ogg--Shafarevich to abelian varieties in order to specify which primes ramify in the infinite degree algebraic extension $\Q(A[\ell^\infty])/\Q$. 

\begin{theorem}\label{NOS} Let $A/\Q$ be an abelian variety. For a prime number $p$, the following are equivalent:
\begin{enumerate}
    \item\label{NOS-1} $A$ has good reduction at $p$,
    \item\label{NOS-2} $\Q(A[m])/\Q$ is unramified at $p$ for each positive integer $m$, not divisible by $p$, and
    \item\label{NOS-3} $\Q(A[m])/\Q$ is unramified at $p$ for infinitely many positive integers $m$, not divisible by $p$.
\end{enumerate}
\end{theorem}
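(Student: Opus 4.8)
The plan is to prove the cycle of implications $(\ref{NOS-1}) \Rightarrow (\ref{NOS-2}) \Rightarrow (\ref{NOS-3}) \Rightarrow (\ref{NOS-1})$, of which $(\ref{NOS-2}) \Rightarrow (\ref{NOS-3})$ is trivial. For $(\ref{NOS-1}) \Rightarrow (\ref{NOS-2})$, suppose $A$ has good reduction at $p$, so that $A$ extends to an abelian scheme $\mathcal{A}$ over $\Z_{(p)}$. For any $m$ coprime to $p$, multiplication by $m$ on $\mathcal{A}$ is finite flat of degree $m^{2g}$ and is étale, since $m$ is invertible in the residue field; hence $\mathcal{A}[m]$ is a finite étale group scheme over $\Z_{(p)}$, and so the coordinates of its points --- which generate precisely $\Q(A[m])$ --- lie in an extension of $\Q$ that is unramified at $p$. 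This mirrors the classical argument for elliptic curves.

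For the essential implication $(\ref{NOS-3}) \Rightarrow (\ref{NOS-1})$, I would localize at $p$ and work over $\Q_p$, so that the hypothesis becomes: the inertia group $I_p \subseteq \Gal(\overline{\Q_p}/\Q_p)$ acts trivially on $A[m]$ for infinitely many positive integers $m$ coprime to $p$. Let $\mathcal{A}/\Z_p$ be the Néron model of $A$, let $\mathcal{A}^0$ be its identity component, and let $\Phi \coloneqq \mathcal{A}/\mathcal{A}^0$ be the finite group of connected components of the special fiber. Write the special fiber $\mathcal{A}^0_{\overline{\F}_p}$, via its Chevalley decomposition, as an extension of an abelian variety $B$ of dimension $a$ by a commutative connected linear group $T \times U$ with $T$ a torus of dimension $t$ and $U$ unipotent of dimension $u$; recall $t + u + a = g$ and that good reduction is exactly the case $t = u = 0$. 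Fix $m$ coprime to $p$ with $I_p$ acting trivially on $A[m]$. Then $A[m] \subseteq A(\Q_p^{\mathrm{nr}}) = \mathcal{A}(\Z_p^{\mathrm{nr}})$ by the Néron mapping property, and since the kernel of reduction $\mathcal{A}(\Z_p^{\mathrm{nr}}) \to \mathcal{A}(\overline{\F}_p)$ is pro-$p$, reduction embeds $A[m]$ into $\mathcal{A}(\overline{\F}_p)$. Because $U$ has no prime-to-$p$ torsion and $\mathcal{A}^0(\overline{\F}_p)$ is divisible, one gets $\# \mathcal{A}^0(\overline{\F}_p)[m] = m^{t + 2a}$, hence $\# \mathcal{A}(\overline{\F}_p)[m] \leq \#\Phi \cdot m^{t + 2a}$; comparing with $\# A[m] = m^{2g}$ gives $m^{2g - t - 2a} \leq \#\Phi$. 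If $A$ did not have good reduction at $p$, then $2g - t - 2a = t + 2u \geq 1$, forcing $m \leq \#\Phi$ for every admissible $m$ and contradicting that there are infinitely many. Hence $A$ has good reduction at $p$.

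The crux --- and the only part requiring real input --- is $(\ref{NOS-3}) \Rightarrow (\ref{NOS-1})$, which rests on the structure theory of Néron models: finiteness of $\Phi$, the Chevalley decomposition of the special fiber, the Néron mapping property, and the pro-$p$-ness of the reduction kernel. In the write-up I would most likely instead cite the classical Néron--Ogg--Shafarevich criterion of Serre and Tate for the equivalence of $(\ref{NOS-1})$ with the unramifiedness of $T_\ell A$ at $p$ for one (equivalently, every) $\ell \neq p$, and then add only the short combinatorial reduction of $(\ref{NOS-3})$ to that statement: the set of admissible $m$ is closed under divisors and under least common multiples, because $A[\operatorname{lcm}(m_1,m_2)] = A[m_1] + A[m_2]$, so it either contains every power of some prime $\ell \neq p$, yielding an unramified $T_\ell A$ directly, or else contains infinitely many distinct primes, in which case the component-group count of the previous paragraph applies verbatim with $m$ prime.
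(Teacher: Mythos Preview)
Your argument is correct; it is essentially the Serre--Tate proof, which is exactly what the paper invokes (its entire proof reads ``See \cite[Theorem 1]{ST1968}''). Your own closing remark that you would likely just cite the classical criterion of Serre and Tate therefore matches the paper's approach precisely.
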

\begin{proof} See \cite[Theorem 1]{ST1968}.
\end{proof}

\begin{corollary} \label{cor-NOS} Let $B_A$ be the product of $\ell$ and the primes of bad reduction for $A$. The extension $\Q(A[\ell^\infty])/\Q$ is ramified at exactly the prime divisors of $B_A$.
\end{corollary}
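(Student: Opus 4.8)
The plan is to prove the two inclusions of prime sets separately, and within the ``ramified'' direction to treat the prime $\ell$ by a different argument than the bad primes of $A$. Throughout I would use the characterization, recorded just before Theorem~\ref{NOS}, that an infinite algebraic extension of $\Q$ is unramified at $p$ precisely when each of its finite subextensions is, together with the tower
\[
\Q \subseteq \Q(A[\ell]) \subseteq \Q(A[\ell^2]) \subseteq \cdots, \qquad \Q(A[\ell^\infty]) = \bigcup_{k \geq 1} \Q(A[\ell^k]),
\]
which is a chain of subfields of $\Q(A[\ell^\infty])$ in the sense of \S\ref{NOS-sec}.

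For the ``unramified'' inclusion, suppose $p \nmid B_A$, so that $p \neq \ell$ and $A$ has good reduction at $p$. Since $p \neq \ell$, each integer $\ell^k$ is coprime to $p$, so the implication $(\ref{NOS-1}) \Rightarrow (\ref{NOS-2})$ of Theorem~\ref{NOS} shows that every $\Q(A[\ell^k])/\Q$ is unramified at $p$. As these fields form a chain of subfields of $\Q(A[\ell^\infty])$ with union $\Q(A[\ell^\infty])$, the extension $\Q(A[\ell^\infty])/\Q$ is unramified at $p$ by definition. For the reverse inclusion, first take a prime $p \mid B_A$ that is a prime of bad reduction for $A$ with $p \neq \ell$. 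Then $(\ref{NOS-1})$ fails, hence so does $(\ref{NOS-3})$, so only finitely many positive integers $m$ coprime to $p$ satisfy that $\Q(A[m])/\Q$ is unramified at $p$. Since $\{\ell^k : k \geq 1\}$ is an infinite set of such integers, some $\Q(A[\ell^k])/\Q$ is ramified at $p$; being a finite subextension of $\Q(A[\ell^\infty])/\Q$, its presence forces $\Q(A[\ell^\infty])/\Q$ to be ramified at $p$.

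There remains the prime $p = \ell$ itself, which always divides $B_A$ (whether or not $A$ has bad reduction there). I expect this to be the only real point of the argument, since it is not furnished directly by Theorem~\ref{NOS}: bad reduction at $\ell$ controls only $\Q(A[m])$ for $m$ coprime to $\ell$. Here I would invoke the Weil pairing, which gives for each $k$ a perfect, Galois-equivariant pairing $A[\ell^k] \times A^\vee[\ell^k] \to \mu_{\ell^k}$, so that $\Q(\mu_{\ell^k}) \subseteq \Q\bigl(A[\ell^k],\, A^\vee[\ell^k]\bigr)$. Combining this with the fact that any polarization of $A$ is a $\Q$-isogeny $A \to A^\vee$ (which forces $\Q(A^\vee[\ell^\infty]) \subseteq \Q(A[\ell^\infty])$), one obtains $\Q(\mu_{\ell^\infty}) \subseteq \Q(A[\ell^\infty])$; in the intended application $A$ is a product of elliptic curves, carrying a product principal polarization, and one gets this containment immediately. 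Since $\Q(\mu_{\ell^2})/\Q$ — or $\Q(\mu_4)/\Q$ when $\ell = 2$ — is a finite subextension of $\Q(A[\ell^\infty])/\Q$ that is ramified at $\ell$, the extension $\Q(A[\ell^\infty])/\Q$ is ramified at $\ell$.

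Assembling the three parts, the primes ramifying in $\Q(A[\ell^\infty])/\Q$ are exactly $\ell$ together with the primes of bad reduction for $A$, i.e.\ the prime divisors of $B_A$. The delicate step is the $\ell$-part; the remaining cases are a direct unwinding of Theorem~\ref{NOS} and the definition of ramification in an infinite extension.
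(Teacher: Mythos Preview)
Your proof is correct and follows essentially the same path as the paper's: both handle $p \nmid B_A$ via (\ref{NOS-1})$\Rightarrow$(\ref{NOS-2}), handle bad primes $p \neq \ell$ via (\ref{NOS-3})$\Rightarrow$(\ref{NOS-1}) (you phrase this contrapositively, which is the same content), and treat $p = \ell$ by embedding a ramified cyclotomic subfield into the $\ell$-power division tower via the Weil pairing. The only cosmetic difference is that the paper obtains $\Q(\zeta_{\ell^2}) \subseteq \Q(A[\ell^2])$ by citing an exercise in \cite{Se1997}, whereas you spell out the Weil-pairing/polarization argument explicitly.
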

\begin{proof} First, we note that $\ell$ is ramified in $\Q(A[\ell^\infty])/\Q$. Indeed, let $\zeta_{\ell^2} \in \overline{\Q}$ be a primitive  $\ell^2$-root of unity. It follows from the Weil pairing on $E[\ell^2]$ (see the exercise in \cite[p. 55]{Se1997}) that
\[
\Q \subseteq
\Q(\zeta_{\ell^2}) \subseteq
\Q(A[\ell^2])  \subseteq
\Q(A[\ell^\infty]). \]
Because $\ell$ ramifies in $\Q(\zeta_{\ell^2})/\Q$, it follows that $\ell$  ramifies in $\Q(A[\ell^\infty])/\Q$.\footnote{If $\ell \neq 2$, then it suffices to consider $\Q \subseteq
\Q(\zeta_{\ell}) \subseteq \Q(A[\ell])  \subseteq \Q(A[\ell^\infty])$ to observe that $\ell$ ramifies in $\Q(A[\ell^\infty])/\Q$.}

We continue the proof by showing that the extension $\Q(A[\ell^\infty])/\Q$ is unramified at exactly the primes not dividing $B_A$. Let $p$ be a prime number such that $p\nmid B_A$. Then $A$ has good reduction at $p$, so it follows from the equivalence of (\ref{NOS-1}) and (\ref{NOS-2}) in Theorem \ref{NOS} that in the chain of subfields
\[\Q\subseteq \Q(A[\ell])\subseteq \Q(A[\ell^2])\subseteq\cdots \subseteq \Q(A[\ell^\infty]),\] 
the prime $p$ is unramified in  $\Q(A[\ell^n])/\Q$ for each $n\geq 0$. Thus  $p$ is unramified in $\Q(A[\ell^\infty])/\Q$. Now suppose that $p$ is distinct from $\ell$ and is unramified in $\Q(A[\ell^\infty])/\Q$. Then in fact $p$ is unramified in $\Q(A[\ell^n])/\Q$ for each $n\geq 0$. By the equivalence of (\ref{NOS-1}) and (\ref{NOS-3}) in Theorem \ref{NOS}, $p$ is a prime of good reduction of $A$, so $p \nmid B_A$. 
\end{proof}

\begin{remark}\label{NOS-gal}
Equivalently, $\rho_{A, \ell}$ is ramified at exactly the prime divisors of $B_A$.
\end{remark}

\begin{remark} \label{prod} Let $A = E_1 \times E_2$ be the product of two elliptic curves $E_1$ and $E_2$, with conductors $N_{E_1}$ and $N_{E_2}$, respectively. It follows from  Theorem \ref{NOS} that $A$ has good reduction at a prime $\ell$ if and only if $E_1$ and $E_2$ both have good reduction at $\ell$. Thus $\rad(B_A) = \rad(\ell N_{E_1} N_{E_2})$.
\end{remark}

\subsection{Effective isogeny theorem} In 1968, Serre proved the isogeny theorem for elliptic curves with non-integral $j$-invariant \cite[p. IV-14]{Se1968}. This was  generalized by Faltings for arbitrary abelian varieties \cite[Korollar 2, p. 361]{Fa1983}. For elliptic curves $E_1/\Q$ and $E_2/\Q$, the isogeny theorem gives that $E_1$ and $E_2$ are $\Q$-isogenous if and only if for each prime $p \nmid N_{E_1} N_{E_2}$, one has that
\begin{equation} \label{isog-ap}  a_p(E_1) = a_p(E_2). \end{equation}
Provided that $E_1$ and $E_2$ are without complex multiplication and not $\Q$-isogenous, Serre proved in \cite[Th\'{e}or\`{e}me 21]{Se1981} that under GRH, the least prime number $p \nmid N_{E_1} N_{E_2}$ for which equality in  (\ref{isog-ap}) fails to hold satisfies the inequality
\begin{equation} \label{serre-isog} p \leq C_4 (\log \rad(N_{E_1} N_{E_2}) )^2 (\log \log \rad(2N_{E_1}N_{E_2}))^{12}, \end{equation}
for some absolute, computable constant $C_4$. In this section, we improve upon the bound in (\ref{serre-isog}) by  removing the ``$\log\log$'' factor.

We begin by offering a refinement of a proposition due to Serre. Serre mentioned his result, without proof, in \cite[Note 632.6]{Se1986}. Much later, he communicated an elegant proof that appears in \cite[Theorem 4.7]{BK2016}. Our proof builds on the one appearing in \cite{BK2016} by considering a quotient by scalar matrices as in the proof of \cite[Th\'{e}or\`{e}me 21']{Se1981}. Our bound on the order of $G$ (as below) coincides with Serre's when $\ell = 2$, which is the prime for which we will apply the result in the 
proof of Theorem \ref{thrm-qisog}. However, it does offer an improvement in the case when $\ell \neq 2$, so we find it worthwhile to include it nonetheless.

\begin{proposition} \label{prop-serre}  
Let $\ell$ be a prime number and $r$ be a positive integer. Let $\Gamma$ be a group and $\rho_1,\rho_2\colon \Gamma \longrightarrow \GL_r(\Z_\ell)$ be group homomorphisms. Suppose  there is an element $\gamma \in \Gamma$ such that $\tr \rho_1(\gamma) \neq \tr \rho_2(\gamma)$. Then there exists a quotient $G$ of $\Gamma$ and a subset $C \subseteq G$ for which
\begin{enumerate}
\item \label{G-ord} the order of $G$ is at most $\frac{\ell^{2r^2}-1}{\ell-1}$, 
\item the set $C$ is nonempty and closed under conjugation in $G$, and
\item if the image in $G$ of an element $\gamma \in \Gamma$ belongs to $C$, then $\tr \rho_1(\gamma) \neq \tr \rho_2(\gamma)$.
\end{enumerate}
\end{proposition}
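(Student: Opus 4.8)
The plan is to run the Faltings--Serre linear-algebra argument over $\Z_\ell$ and then pass to a quotient by scalar matrices, the latter being the source of the improvement by a factor of $\ell-1$ over the bound implicit in \cite[Theorem 4.7]{BK2016}. First I would package the two representations into one: set $\rho\colon \Gamma \to \Mat_r(\Z_\ell)\times\Mat_r(\Z_\ell)$ by $\rho(g)\coloneqq(\rho_1(g),\rho_2(g))$. Since each $\rho_i$ is a homomorphism, $\rho(g_1g_2)=\rho(g_1)\rho(g_2)$, $\rho(e)=(I,I)$, and $\rho(g)^{-1}=\rho(g^{-1})$. Let $M$ be the $\Z_\ell$-submodule of $\Mat_r(\Z_\ell)\times\Mat_r(\Z_\ell)$ spanned by $\rho(\Gamma)$. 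Multiplicativity of $\rho$ shows that $M$ is closed under multiplication, so $M$ is a unital $\Z_\ell$-subalgebra, $\rho(\Gamma)\subseteq M^\times$, and every element of $M$ is a $\Z_\ell$-linear combination $\sum_i c_i\rho(h_i)$ with $c_i\in\Z_\ell$, $h_i\in\Gamma$. As a submodule of the free $\Z_\ell$-module $\Mat_r(\Z_\ell)\times\Mat_r(\Z_\ell)$ of rank $2r^2$, the module $M$ is free of some rank $d$, and $1\le d\le 2r^2$ since $M$ is nonzero (it contains $1$).

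Next I would introduce the trace-difference functional $\delta\colon\Mat_r(\Z_\ell)\times\Mat_r(\Z_\ell)\to\Z_\ell$, $\delta(x,y)\coloneqq \tr x-\tr y$. It is $\Z_\ell$-linear and, as $\tr$ is conjugation-invariant in each factor, $\delta$ is invariant under conjugation by $\GL_r(\Z_\ell)\times\GL_r(\Z_\ell)$, in particular by $M^\times$. By hypothesis $\delta(\rho(\gamma))=\tr\rho_1(\gamma)-\tr\rho_2(\gamma)\ne 0$, so $\delta(M)$ is a nonzero ideal $\ell^s\Z_\ell$ with $s\ge 0$. Put $\delta'\coloneqq \ell^{-s}\delta|_M\colon M\to\Z_\ell$, which is surjective and $\Z_\ell$-linear, and let $\overline{\delta'}\colon M/\ell M\to\F_\ell$ be its reduction, a nonzero $\F_\ell$-linear functional. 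The elementary point driving the whole argument is this: for $g\in\Gamma$, if $\overline{\delta'}$ does not vanish on the image of $\rho(g)$ in $M/\ell M$, then $\delta'(\rho(g))\in\Z_\ell^\times$, hence $\delta(\rho(g))=\ell^s\delta'(\rho(g))\ne 0$, i.e.\ $\tr\rho_1(g)\ne\tr\rho_2(g)$; moreover this nonvanishing is unaffected by replacing $\rho(g)$ with $\lambda\rho(g)$ for any $\lambda\in\F_\ell^\times$.

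Finally I would assemble $G$ and $C$. The finite $\F_\ell$-algebra $M/\ell M$ has $\ell^d$ elements, so $(M/\ell M)^\times$ has at most $\ell^d-1$ elements and contains the central subgroup $\F_\ell^\times\cdot 1$ of order $\ell-1$ (note $1\ne 0$ in $M/\ell M$, as $1\notin\ell M$). Let $G$ be the image of the composite homomorphism $\Gamma\xrightarrow{\rho}M^\times\to (M/\ell M)^\times\to (M/\ell M)^\times/(\F_\ell^\times\cdot 1)$; then $G$ is a quotient of $\Gamma$ with $\#G\le (\ell^d-1)/(\ell-1)\le(\ell^{2r^2}-1)/(\ell-1)$, which is (1). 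By the scalar-invariance noted above, the set of $g\in\Gamma$ for which $\overline{\delta'}$ is nonzero on the image of $\rho(g)$ is a union of fibres of $\Gamma\to G$; let $C\subseteq G$ be the corresponding subset. Conjugation-invariance of $\delta$, hence of $\overline{\delta'}$, shows $C$ is closed under conjugation in $G$. And $C$ is nonempty: $\overline{\delta'}\ne 0$ while $M/\ell M$ is $\F_\ell$-spanned by $\set{\rho(h)\bmod \ell M : h\in\Gamma}$, so $\overline{\delta'}$ fails to vanish on some $\rho(h)\bmod\ell M$, whose class in $G$ then lies in $C$; this gives (2). Property (3) is precisely the implication recorded at the end of the previous paragraph.

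I do not expect a genuine obstacle: once one works inside the $\Z_\ell$-subalgebra generated by the pair $(\rho_1,\rho_2)$, the argument is essentially formal. The one step that deserves care — and the one responsible for the sharpening over \cite{BK2016} — is the passage to the quotient by the scalar subgroup $\F_\ell^\times\cdot 1$: I must check both that the defining condition of $C$ descends to this quotient (scalar-invariance of the nonvanishing of $\overline{\delta'}$) and that dividing the order of $(M/\ell M)^\times$ by $\ell-1$ is legitimate (the scalars genuinely form a subgroup of $(M/\ell M)^\times$, which uses $1\notin\ell M$).
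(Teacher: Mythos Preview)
Your proposal is correct and follows essentially the same route as the paper: form the $\Z_\ell$-subalgebra generated by $(\rho_1\times\rho_2)(\Gamma)$, pass to its reduction modulo $\ell$, quotient by the central scalars $\F_\ell^\times$, and detect $C$ via the rescaled trace-difference functional; your $M$, $s$, $\delta'$, and $G$ coincide with the paper's $A$, $m$, $\lambda$, and $G'H/H$. The only cosmetic differences are that you bound $\#G$ directly as a subgroup of $(M/\ell M)^\times/\F_\ell^\times$ whereas the paper invokes the second isomorphism theorem, and you argue nonemptiness of $C$ by noting that $\overline{\delta'}$ cannot vanish on a spanning set, which is the same observation in different clothing.
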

\begin{proof} Let $M\coloneqq \Mat_{r \times r}(\Z_\ell)$ be the $\Z_\ell$-algebra of the $r \times r$ matrices with coefficients in $\Z_\ell$. 
Let $A$ denote the $\Z_\ell$-algebra  generated by the image of $\Gamma$ under the product map
\[ 
\rho_1 \times \rho_2 \colon \Gamma \longrightarrow  \GL_r(\Z_\ell) \times \GL_r(\Z_\ell). 
\]
Let $G'$ be the image of $\Gamma$ under $\rho_1 \times \rho_2$ in $A/\ell A$. 
Because of the existence of the identity element in $\Gamma$, the algebra $A$ contains the set of scalar matrices \[\Lambda_{2r} \coloneqq \set{(\mu I_r, \mu I_r) : \mu \in \Z_\ell}.\] We write $H$ to denote the image of 
$\Lambda_{2r}\backslash \ell\Lambda_{2r}$ in $A/\ell A$. Then we have the group  isomorphism $H\cong \Z/\ell \Z^{\times}$, as the image of $(\mu I_r, \mu I_r)$ in $A/\ell A$ is determined by the image of $\mu$ in $\Z/\ell \Z$.  Clearly, $H\cap G'$ is a normal subgroup of $G'$. 
Consider the group $G \coloneqq  G'/ (H\cap G')$. From the second isomorphism theorem in group theory, we have  
$
G \cong G'H/H.
$
Since the rank of $A$ as a free $\Z_{\ell}$-module is at most $2r^2$ and both $G'H$ and $H$ are groups in $A/\ell A$, we obtain the bound  
\[
|G|= |G'H/H| \leq \frac{\ell^{2r^2}-1}{\ell-1}.
\]

Let $m$ be the largest nonnegative integer such that for each $\gamma\in \Gamma$, one has that
\[
\tr \rho_1(\gamma)\equiv \tr \rho_2(\gamma)  \pmod{\ell^m}.
\]
As $A$ is a $\Z_{\ell}$-algebra generated by the image of $\Gamma$ under $\rho_1 \times \rho_2$,  it follows that the congruence $\tr x_1 \equiv \tr x_2 \pmod{\ell^m}$ holds for each $(x_1,x_2) \in A$.
We obtain  the $\Z_\ell$-module homomorphism $\lambda\colon A \longrightarrow \Z_\ell$ given by
\[
\lambda(x_1,x_2) = \ell^{-m}(\tr x_1 - \tr x_2). 
\]
Since $\lambda(\ell A) \subseteq \ell\Z_\ell$, we may consider the induced $\Z/\ell\Z$-module homomorphism $\bar{\lambda}\colon A/\ell A \longrightarrow \Z/\ell\Z$. 

Let $C$ be the set of elements in $G$ whose preimages in $G'$ all take nonzero values under $\bar{\lambda}$. From the definition of $m$ and the linearity of the trace map, there exists a $\gamma_0 \in \Gamma$ such that
\[ \tr \mu \rho_1(\gamma_0) \not\equiv \tr \mu \rho_2(\gamma_0) \pmod{\ell^{m+1}} \quad \forall \mu \in \Z_\ell^\times. \]

Note that the image of $(\rho_1 \times \rho_2)(\gamma_0)$ in $G$ is contained in $C$, so $C$ is nonempty. Further, $C$ is closed under conjugation because trace is invariant under conjugation. Finally, suppose that $\gamma \in \Gamma$ is such that the image of $\gamma$ in $G$ is contained in $C$. Then $ \lambda((\rho_1 \times \rho_2)(\gamma)) \not\in \ell \Z_\ell$, and in particular $\tr \rho_1(\gamma) \neq \tr \rho_2(\gamma)$.
\end{proof}

We now employ the above proposition, together with the effective Chebotarev density theorem in the form of Corollary \ref{ECDT-cor} to give an improvement on the bound in (\ref{serre-isog}), as  recorded in Theorem  \ref{thrm-qisog}.

\begin{proof}[
Proof of Theorem \ref{thrm-qisog}]
Let $A \coloneqq E_1 \times E_2$ and apply Proposition \ref{prop-serre} with $\ell \coloneqq 2$, $r \coloneqq 2$,  $\Gamma \coloneqq \Gal(\Q(A[2^\infty])/\Q)$, and $\rho_i \coloneqq \rho_{E_i,2}$ for each $i = 1,2$. Let $G$ and $C$ be as in the conclusion of Proposition \ref{prop-serre}. Let $K$ be a subfield of  $\Q(A[2^\infty])$ for which $\Gal(K/\Q) = G$, which exists by the fundamental theorem of infinite Galois theory. From Proposition \ref{prop-serre}, the size of $G$ is bounded above by $255$. Since $\Q(A[2^\infty]) = \bigcup_k \Q(A[2^k])$, it follows that $K \subseteq \Q(A[2^n])$ for some $n$.
 Thus $[K:\Q]$ divides $[\Q(A[2^n]): \Q ]$, which divides $|\GL_2(\Z/2^n\Z)|^2 = (6 \cdot 16^{n-1})^2$.  One can check that the largest divisor of $(6 \cdot 16^{n-1})^2$ that is at most 255 is 192. Thus $\abs{G} = [K : \Q] \leq 192$. 
 
  Applying Corollary \ref{ECDT-cor} with $K$ and $C$ as above and $m \coloneqq \rad(2N_{E_1}N_{E_2})$, we obtain a prime number $p$ not dividing $m$ such that $(\frac{p}{K/\Q}) \subseteq C$ which satisfies inequality (\ref{ECDT-cor-ineq}). As $\Frob_p\mid_{K} = (\frac{p}{K/\Q})$, it follows from Proposition \ref{prop-serre} that
\[ \tr \rho_{E_1,2}(\Frob_p) \neq \tr \rho_{E_2,2}(\Frob_p). \]
Consequently,  $a_p(E_1) \neq a_p(E_2)$. It remains to show that $p$ satisfies the claimed bound.

As in the statement of Corollary \ref{ECDT-cor}, let $\tilde{K} \coloneqq K(\sqrt{m})$. We have that
\begin{equation*}
[\tilde{K}:\Q] \leq 2[K:\Q] \leq 2 \cdot 192 = 384.
\end{equation*}
Thus by Corollary \ref{ECDT-cor},
\begin{equation*}
    p \leq \p{\tilde{a} \log d_{\tilde{K}} + 384 \tilde{b} + \tilde{c}}^2.
\end{equation*}
where  $\tilde{a},\tilde{b},\tilde{c}$ are absolute constants that may be taken to be $4, 2.5,$ and $5$, respectively, or may be taken to be the improved values given in \cite[Table 3]{BS1996} associated with $\tilde{K}$. 

Thus if $\log d_{\tilde{K}} \leq 100$, then by applying Theorem \ref{effective-cdt} with the constants $4, 2.5,$ and $5$, we find that
\[ p \leq (4 \cdot 100 + 2.5 \cdot 384 + 5)^2 = 1863225. \]
If $100\leq \log d_{\tilde{K}} \leq 1000$, then by Theorem \ref{effective-cdt} with improved constants from \cite[Table 3]{BS1996}, we find that
\[ p \leq (1.755 \cdot 1000 + 0.23 \cdot 384 + 6.8)^2 = 3422944.0144. \]
Next, note that for all real numbers $x \geq 1000$, we have that
\[
1.257 x + 7.3 \geq a'x + b'd + c'
\]
for all $(a',b',c')$ that appears as any entry in the last three rows of \cite[Table 3]{BS1996}, where $d$ is the maximal degree for the corresponding column (and $d = 384$ for the last column). Thus if $\log d_{\tilde{K}} \geq 1000 $, then 
\begin{equation} \label{E:pBd}
    p \leq (1.257 \log d_{\tilde{K}} + 7.3)^2.
\end{equation} 

Therefore, in all cases, we have that
\begin{equation} \label{tc-1}
p \leq \max(3422944.0144, (1.257 \log d_{\tilde{K}} + 7.3)^2).
\end{equation}

We have that by Remark \ref{prod}, $K/\Q$ is unramified outside of the prime divisors of $m = \rad(2N_{E_1}N_{E_2})$.  As $\tilde{K}$ is the compositum of $K$ and $\Q(\sqrt{m})$, the primes that ramify in $\tilde{K}$ are precisely those that ramify in $K$ or in $\Q(\sqrt{m})$. Thus, since $\rad(d_{\Q(\sqrt{m})}) = \rad(2N_{E_1}N_{E_2})$ and $\rad(d_K) \mid \rad(2N_{E_1}N_{E_2})$,
\[ \rad d_{\tilde{K}} = \rad(d_{K} d_{\Q(\sqrt{m})}) = \rad(2N_{E_1}N_{E_2}).\]
Hence by Lemma \ref{lem-logdK},
\begin{equation} \label{tc-4}
\log d_{\tilde{K}} \leq 383 \log \rad(2N_{E_1}N_{E_2}) + 384 \log(384).
\end{equation}
Using the trivial inequality $2N_{E_1}N_{E_2} \geq 2$, we observe that
\begin{equation} \label{tc-3} (1.257( 383 \log \rad(2N_{E_1}N_{E_2}) + 384 \log(384))  + 7.3)^2 \geq 3422944.0144. \end{equation}
Considering (\ref{tc-1}), (\ref{tc-4}), and (\ref{tc-3}), 
we conclude that
\[
p\leq  (1.257( 383 \log \rad(2N_{E_1}N_{E_2}) + 384 \log(384))  + 7.3)^2.
\]
Partially expanding the right-hand side above gives the claimed bound for $p$.
\end{proof}

\begin{remark} \label{R:Table} In the proof above, for fields $\tilde{K}$ with $\log d_{\tilde{K}} \leq 100$, we use the general bound of \cite[Theorem 5.1]{BS1996} rather than the improved bounds appearing in Table 3 of \cite{BS1996}. We do so because Table 3 does not give constants in boxes where some combination of degree and discriminant would violate Minkowski's theorem. This only affects certain entries in the table for which the logarithm of the absolute value of the discriminant is less than $100$. For example, the maximal totally real subfield of $\Q(\zeta_7)$ has degree $3$ and discriminant $49$, yet the table gives no constants for number fields with  degree $3$-$4$ for which the logarithm of the absolute value of the discriminant is less than $5$.
\end{remark}

\section{A bound on the largest non-surjective prime} \label{main-sec}

We begin in \S\ref{quad-sec} with some necessary background on quadratic Galois characters and quadratic twists of elliptic curves. In \S\ref{main-subsec}, we put  together the pieces to complete the proof of Theorem \ref{main-thrm}. Finally, we present a numerical example in \S\ref{ex-sec} that illustrates the theorem.

\subsection{Quadratic twists} \label{quad-sec}
By a \textit{quadratic Galois character}, we mean a surjective group homomorphism
\[ \chi \colon \Gal(\overline{\Q}/\Q) \longrightarrow \set{\pm 1}. \]
Since $\ker \chi$ is an index two subgroup of $\Gal(\overline{\Q}/\Q)$, there exists a nonzero squarefree integer $D$ such that $\ker \chi = \Gal(\overline{\Q}/\Q(\sqrt{D}))$. Consequently $\chi$ factors through $\Gal(\overline{\Q}/\Q) / \ker \chi \cong \Gal(\Q(\sqrt{D})/\Q)$,
\begin{equation*}
\begin{tikzcd}
\Gal(\overline{\Q}/\Q) \ar[dr,swap,"\res_D"] \ar[rr,"\chi"]& & \set{\pm 1} \\
 & \Gal(\Q(\sqrt{D})/\Q) \ar[ur,swap,"\sim"] &
\end{tikzcd}
\end{equation*}
where $\res_D$ denotes the restriction homomorphism. Thus, for each $\sigma \in \Gal(\overline{\Q}/\Q)$,
\begin{equation} \label{chi}
\chi(\sigma) =
\begin{cases}
    1 & \sigma(\sqrt{D}) = \sqrt{D} \\
    - 1 & \sigma(\sqrt{D}) = - \sqrt{D}. \\
\end{cases}
\end{equation}
We write $\chi_D$ to denote the quadratic Galois character described by (\ref{chi}). Note that each quadratic Galois character may be written as $\chi_D$ for a unique squarefree integer $D$.

For a prime number $p$, let $I_p$ and $I_{p}(D)$ denote the inertia subgroups of $\Gal(\overline{\Q}/\Q)$ and $\Gal(\Q(\sqrt{D})/\Q)$, respectively. One says that $\chi_D$ is \textit{unramified} at $p$ if $I_p \subseteq \ker \chi_D$. From the description of $\chi_D$ given in (\ref{chi}) and upon noting that $\res_D(I_p) = I_p(D)$, we see that
\begin{equation} \label{chiD-ram}
\chi_D \text{ is unramified at } p  \iff \Q(\sqrt{D})/\Q \text{ is unramified at } p. 
\end{equation}
If $p \nmid D$, then $\p{\tfrac{p}{\Q(\sqrt{D})/\Q}}(\sqrt{D}) = \kronecker{D}{p} \sqrt{D}$ by \cite[p.\ 88]{Le2013}, where $\kronecker{D}{p}$ denotes the Legendre symbol of $D$ with respect to $p$. Thus, from (\ref{chi}), we have that 
\begin{equation} \label{kron-ap} 
\chi_D\left(\p{\tfrac{p}{\Q(\sqrt{D})/\Q}}\right) = \kronecker{D}{p}. \end{equation}

Now consider an elliptic curve $E/\Q$ given by a Weierstrass equation
\begin{equation}\label{model-EC}
y^2 + a_1 xy + a_3 y = x^3 + a_2 x^2 + a_4 x + a_6. 
\end{equation}
The \textit{quadratic twist} of $E$ by $\chi_D$ (or, equivalently, by $D$) is the elliptic curve $E_D/\Q$ given by
\begin{equation}\label{twist-model-ECs}
y^2 + a_1 xy + a_3 y = x^3 + \p{a_2D + a_1^2 \frac{D-1}{4}} x^2 + \p{a_4 D^2 + a_1 a_3 \frac{D^2-1}{2}} x + a_6 D^3 + a_3^2 \frac{D^3-1}{4}. 
\end{equation}
See \cite[\S 4.3]{Co1999} or \cite[X.2]{Si2009} for background on quadratic twists. By taking (\ref{model-EC}) to be a minimal model for $E$, upon computing and comparing the discriminants of (\ref{model-EC}) and (\ref{twist-model-ECs}), we find that
\begin{equation}\label{twist-cond}
\rad(N_{E_D}) \quad \text{divides} \quad \rad(2D N_E). 
\end{equation}
Further, if $p$ is a prime number such that $p \nmid 2 D N_E$, then by \cite[Exercise 4.10]{Wa2003} and (\ref{kron-ap}),
\begin{equation}\label{ap-chi}
a_p(E) = \chi_D(\Frob_p) a_p(E_D).
\end{equation}

We conclude with a lemma about nontrivial quadratic twists (see, e.g., \cite[p. 199]{Se1981}).

\begin{lemma}\label{non-isogenous-lem} If $D \neq 1$ is squarefree and $E/\Q$ is without complex multiplication, then $E$ and $E_D$ are not $\Q$-isogenous.
\end{lemma}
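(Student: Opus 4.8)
The plan is to prove the contrapositive: if $E$ and $E_D$ are $\Q$-isogenous, then either $D = 1$ or $E$ has complex multiplication. So suppose $\phi: E \to E_D$ is a $\Q$-isogeny with $D$ squarefree. By the isogeny theorem recalled earlier (equation \eqref{isog-ap}), for every prime $p \nmid N_E N_{E_D}$ we have $a_p(E) = a_p(E_D)$. On the other hand, enlarging the excluded set to primes $p \mid D N_E$ (a finite set), the twisting relation \eqref{ap-chi} gives $a_p(E) = \chi_D(\Frob_p)\, a_p(E_D)$ for all $p \nmid D N_E$. Combining these two identities, for all $p$ outside a finite set we obtain
\[
a_p(E)\bigl(1 - \chi_D(\Frob_p)\bigr) = 0,
\]
so whenever $\chi_D(\Frob_p) = -1$ — equivalently, by \eqref{kron-ap}, whenever $\kronecker{D}{p} = -1$ — we must have $a_p(E) = 0$.

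The next step is to argue that this forces $E$ to have complex multiplication unless $D = 1$. If $D \neq 1$ is squarefree, then $\chi_D$ is a nontrivial character, so by the Chebotarev density theorem the set of primes $p$ with $\kronecker{D}{p} = -1$ has density $1/2$; in particular it is infinite. Hence $a_p(E) = 0$ for a set of primes of density (at least) $1/2$, i.e.\ $E$ is supersingular at a positive density of primes. By a theorem of Elkies (or, more classically, the fact that a non-CM elliptic curve over $\Q$ has supersingular primes of density $0$ — this follows from Serre's work on the $\ell$-adic image and the Sato--Tate heuristics made rigorous, or more simply from the fact that for a non-CM curve the image of $\bar\rho_{E,\ell}$ is large for almost all $\ell$, forcing $a_p \neq 0$ for a positive density of $p$ in any fixed congruence class), this is impossible. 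The cleanest citation-free route: if $E$ has no CM, then $\bar\rho_{E,\ell}$ is surjective for some $\ell$, and the set of $g \in \GL_2(\F_\ell)$ with $\tr g = 0$ has density $< 1/2$ inside $\GL_2(\F_\ell)$; intersecting with the (positive-density) coset cut out by the Legendre condition on $D$ (adjoining $\sqrt{D}$ to the fixed field of $\bar\rho_{E,\ell}$ and applying Chebotarev to the compositum) still leaves primes with $\kronecker{D}{p} = -1$ and $a_p(E) \neq 0$, a contradiction. Therefore $D = 1$.

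Actually, the slickest formulation avoids even naming densities: take any $\ell$ for which $\bar\rho_{E,\ell}$ is surjective (such $\ell$ exists by Serre's open image theorem, since $E$ has no CM), let $L$ be the fixed field of $\bar\rho_{E,\ell}$ so $\Gal(L/\Q) \cong \GL_2(\F_\ell)$, and consider $L(\sqrt{D})$. If $D \neq 1$, then $\sqrt{D} \notin L$ would give $\Gal(L(\sqrt D/\Q)) \cong \GL_2(\F_\ell) \times \Z/2\Z$, and one picks a conjugacy class $(g, -1)$ with $\tr g \neq 0$; Chebotarev produces a prime $p$ unramified in $L(\sqrt D)$ with this Frobenius, so $\kronecker{D}{p} = -1$ but $a_p(E) \equiv \tr g \not\equiv 0 \pmod \ell$, in particular $a_p(E) \neq 0$ — contradicting the displayed identity. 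If instead $\sqrt{D} \in L$, then $\Q(\sqrt D)$ is one of the finitely many quadratic subfields of $L$ cut out by an index-two subgroup of $\GL_2(\F_\ell)$ containing $\SL_2(\F_\ell)$; there is only one such subgroup (the kernel of the determinant, composed with the Legendre symbol mod $\ell$), so $\Q(\sqrt D) = \Q(\sqrt{\ell^*})$ where $\ell^* = (-1)^{(\ell-1)/2}\ell$, and one still finds $g \in \SL_2(\F_\ell)$ with $\tr g \neq 0$ and argues as before. Either way we reach a contradiction, so $D = 1$.

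The main obstacle is keeping the argument self-contained: the cleanest proofs invoke either Elkies's supersingular-primes theorem or Serre's open image theorem, and one must be careful that the paper is allowed to cite these (Serre's open image theorem is the very subject of the paper, so it is certainly fair game). A purely elementary alternative — showing directly that $a_p(E) = 0$ for a set of primes of density $\geq 1/2$ is incompatible with $\sum_p a_p(E)^2 p^{-s}$ having a pole of the expected order at $s = 2$ (Rankin--Selberg / the symmetric square $L$-function) — is available but heavier; I would not pursue it. I expect the write-up in the paper simply cites the standard fact that a non-CM elliptic curve over $\Q$ is not isomorphic, even up to isogeny, to its nontrivial quadratic twists, with a one-line Chebotarev justification along the lines above, which is why Serre states it without proof in \cite[p.\ 199]{Se1981}.
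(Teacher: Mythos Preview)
Your core argument---combine the twisting relation with the isogeny theorem to force $a_p(E) = 0$ on the density-$\tfrac{1}{2}$ set of primes with $\chi_D(\Frob_p) = -1$, then contradict the fact that for a non-CM curve the primes with $a_p(E) = 0$ have density zero---is exactly the paper's proof, just phrased contrapositively; the paper simply cites \cite[p.~123]{Se1981} and \cite[p.~131]{El1991} for that density-zero fact and stops there. Your alternative ``slickest'' route via the open image theorem is unnecessary and contains a slip: in the case $\sqrt{D} \in L$, the condition $\chi_D(\Frob_p) = -1$ corresponds to $\det g$ being a \emph{nonsquare} in $\F_\ell^\times$, so you want $g$ in the nontrivial coset (nonsquare determinant) with $\tr g \neq 0$, not $g \in \SL_2(\F_\ell)$.
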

\begin{proof} We know from the Chebotarev density theorem that the natural density of primes $p$ for which $\chi_D(\Frob_p) = -1$ is $\frac{1}{2}$. For such a $p$, if $p \nmid 2 DN_E$, then $a_p(E) = - a_p(E_D)$ by (\ref{ap-chi}). Thus either $a_p(E) \neq a_p(E_D)$ or $a_p(E) = 0$. The density of primes $p$ for which $p \nmid 2 DN_E$ and $a_p(E) = 0$ is 0 by \cite[p. 123]{Se1981}, \cite[p. 131]{El1991}. Thus there exists a prime $p \nmid 2 D N_E$ such that $a_p(E) \neq a_p(E_D)$. As such, $E$ and $E_D$ are not $\Q$-isogenous.
\end{proof}

\subsection{Completing the proof} \label{main-subsec}

We now turn to the problem of bounding $c(E)$. Suppose that $\ell$ is an odd prime such that $G_E(\ell)$ satisfies (\ref{Nns}) of  Proposition \ref{Dickson}. With an appropriate choice of $\F_\ell$-basis  of $E[\ell]$ in defining $\bar{\rho}_{E,\ell}$, we may assume that $G_E(\ell) \subseteq \mathcal{C}_{ns}^+(\ell)$ and $G_E(\ell) \not\subseteq \mathcal{C}_{ns}(\ell)$. Following Serre, we  consider the quadratic Galois character given by the composition
\begin{equation} \label{epsilon}
\epsilon_\ell \colon \Gal(\overline{\Q}/\Q) \overset{\bar{\rho}_{E,\ell}}{\longrightarrow} G_E(\ell) \longrightarrow \frac{\mathcal{C}_{ns}^+(\ell)}{\mathcal{C}_{ns}(\ell)} \overset{\sim}{\longrightarrow} \set{\pm 1}.
\end{equation}
We list some basic properties of $\epsilon_\ell$, which are previously noted in \cite[p.\ 317]{Se1972} and \cite[p.\ 18]{Co2005}.

\begin{lemma} \label{lemma-epsilon} With the above notation and assumptions, $\epsilon_\ell$ satisfies the following properties. 
\begin{enumerate}
    \item \label{lemma-epsilon-unramified} For each prime $p \nmid N_E$, $\epsilon_\ell$ is unramified at $p$.
    \item \label{lemma-epsilon-cond} One has that  $\epsilon_\ell = \chi_D$ for some integer $D \mid N_E$. 
    \item \label{lemma-epsilon-ap} For each prime $p \nmid N_E$, if $\epsilon_\ell(\Frob_p) = -1$, then $a_p(E) \equiv 0 \pmod{\ell}$.
\end{enumerate}
\end{lemma}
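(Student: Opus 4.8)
The plan is to extract all three properties from two ingredients: that $\epsilon_\ell$ factors through $\bar\rho_{E,\ell}$, and the following facts about the target, both visible from \eqref{NormCartan} --- every element of $\mathcal{C}_{ns}^+(\ell)\setminus\mathcal{C}_{ns}(\ell)$ has trace $0$, while the eigenvalues of a non-scalar element of $\mathcal{C}_{ns}(\ell)$ lie in $\F_{\ell^2}\setminus\F_\ell$. Granting these, property \eqref{lemma-epsilon-ap} is immediate for $p\nmid\ell N_E$: if $\epsilon_\ell(\Frob_p)=-1$ then $\bar\rho_{E,\ell}(\Frob_p)\in\mathcal{C}_{ns}^+(\ell)\setminus\mathcal{C}_{ns}(\ell)$, so $a_p(E)\equiv\tr\bar\rho_{E,\ell}(\Frob_p)=0\pmod{\ell}$; the degenerate case $p=\ell$ is dealt with below. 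Throughout I take $\ell\geq 5$, which is all that is used toward Theorem \ref{main-thrm}.

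For \eqref{lemma-epsilon-unramified}, the N\'{e}ron--Ogg--Shafarevich criterion (cf.\ Remark \ref{NOS-gal}) gives that $\bar\rho_{E,\ell}$, and hence $\epsilon_\ell$, is unramified at every $p\nmid\ell N_E$; the only point left is $p=\ell$ when $\ell\nmid N_E$, so that $E$ has good reduction at $\ell$. The goal is to show $\bar\rho_{E,\ell}(I_\ell)\subseteq\mathcal{C}_{ns}(\ell)$, which forces $\epsilon_\ell(I_\ell)=\{1\}$, and I would argue this from Serre's description of $\bar\rho_{E,\ell}|_{I_\ell}$ \cite{Se1972}. If $E$ is ordinary at $\ell$, some element of $\bar\rho_{E,\ell}(I_\ell)$ has eigenvalues $\zeta,1$ with $\zeta$ a generator of $\F_\ell^\times$; since $\ell\geq 5$ gives $\zeta\neq\pm 1$, a matrix with these eigenvalues lies in neither $\mathcal{C}_{ns}(\ell)$ (its eigenvalues are in $\F_\ell$) nor $\mathcal{C}_{ns}^+(\ell)\setminus\mathcal{C}_{ns}(\ell)$ (its trace $\zeta+1$ is nonzero), contradicting $G_E(\ell)\subseteq\mathcal{C}_{ns}^+(\ell)$; hence $E$ is supersingular at $\ell$, whence $a_\ell(E)=0$ --- which also covers $p=\ell$ in \eqref{lemma-epsilon-ap}. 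In the supersingular case $\bar\rho_{E,\ell}(I_\ell)$ is cyclic, generated by a matrix $g$ with eigenvalues $\mu,\mu^\ell$ for some $\mu\in\F_{\ell^2}^\times$ of order $\ell^2-1$; then $\tr g=\mu+\mu^\ell\neq 0$, for $\tr g=0$ would force $\mu^{\ell-1}=-1$, impossible as $\mu^{\ell-1}$ has order $\ell+1>2$. Thus $g\in\mathcal{C}_{ns}(\ell)$, so $\bar\rho_{E,\ell}(I_\ell)=\langle g\rangle\subseteq\mathcal{C}_{ns}(\ell)$, as required.

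Property \eqref{lemma-epsilon-cond} then follows formally: write $\epsilon_\ell=\chi_D$ for the unique squarefree $D$; by \eqref{lemma-epsilon-unramified} and \eqref{chiD-ram}, $\Q(\sqrt{D})/\Q$ is unramified at every $p\nmid N_E$, and since each odd prime dividing $D$ ramifies there --- as does $2$ whenever $D\not\equiv 1\pmod 4$ --- every prime dividing $D$ divides $N_E$, so $D\mid N_E$ in $\Z$; moreover $D\neq 1$, as $\epsilon_\ell$ is nontrivial by the hypothesis $G_E(\ell)\not\subseteq\mathcal{C}_{ns}(\ell)$. I expect the only genuine obstacle to be the case $p=\ell$ of \eqref{lemma-epsilon-unramified}: it cannot be handled by formal manipulation and requires the local theory of $\bar\rho_{E,\ell}$ at $\ell$, the delicate inputs being the exclusion of good ordinary reduction at $\ell$ and the fact that in the supersingular case inertia at $\ell$ acts through the fundamental characters of level $2$ themselves (so the relevant eigenvalue has full order $\ell^2-1$, not the order of a proper power). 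All the rest is the trace bookkeeping in $\mathcal{C}_{ns}^+(\ell)$ recorded at the outset.
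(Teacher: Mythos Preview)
Your proof is correct and follows essentially the same approach as the paper's. Where the paper simply cites \cite[p.\ 295, Lemme 2]{Se1972} for the delicate case $p=\ell$ in part \eqref{lemma-epsilon-unramified}, you spell out the argument via Serre's local description of $\bar\rho_{E,\ell}|_{I_\ell}$ in the ordinary and supersingular cases, and you are also a bit more careful than the paper about the case $p=\ell$ in part \eqref{lemma-epsilon-ap}.
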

\begin{proof}
\begin{enumerate}
    \item If $p\nmid \ell N_E$, then by Remark \ref{NOS-gal}, $\rho_{E, \ell}$ is unramified at $p$. In particular, $\epsilon_\ell$ is unramified at $p$. When $\ell \nmid N_E$ and $p=\ell$, the claimed property follows by a more delicate analysis; see \cite[p.\ 295, Lemme 2]{Se1972}. 
    \item Let $D$ be the squarefree integer such that $\epsilon_\ell = \chi_D$. It follows from the previous part and (\ref{chiD-ram}) that $\Q(\sqrt{D})/\Q$ is unramified outside of the prime divisors of $N_E$. Thus, $D \mid N_E$.
    \item For $p\nmid N_E$, since $\epsilon_{\ell}(\Frob_p) = -1$, we have that $\bar{\rho}_{E, \ell}(\Frob_p) \in \mathcal{C}_{ns}^+(\ell) \setminus \mathcal{C}_{ns}(\ell)$. From Equation (\ref{NormCartan}), we see that $\tr \gamma = 0$ for each $\gamma \in \mathcal{C}_{ns}^+(\ell) \setminus \mathcal{C}_{ns}(\ell)$. Thus $a_p(E) \equiv \tr(\bar{\rho}_{E, \ell}(\Frob_p)) \equiv 0 \pmod \ell$. \qedhere
\end{enumerate}
\end{proof}

We are now in a position to prove the main theorem.

\begin{proof}[Proof of Theorem \ref{main-thrm}] Let $\ell$ be a prime number such that $\ell > 37$ and  $\bar{\rho}_{E,\ell}$ is non-surjective. By Theorem \ref{SMBPR}, we know that up to conjugation, $G_E(\ell) \subseteq \mathcal{C}_{ns}^+(\ell)$ yet $G_E(\ell) \not\subseteq \mathcal{C}_{ns}(\ell)$. Consider the quadratic Galois character  $\epsilon_\ell$ defined in (\ref{epsilon}). Let $D$ be the squarefree integer such that $\epsilon_\ell = \chi_D$ and consider the quadratic twist $E_D$ of $E$. By Lemma \ref{lemma-epsilon}(\ref{lemma-epsilon-cond}) and (\ref{twist-cond}), we have that
\begin{equation} \rad(N_{E_D}) \quad \text{divides} \quad \rad(2 N_E). \label{pf-main-cond}
\end{equation}
For each prime number $p \nmid 2 N_E$, by (\ref{ap-chi}), we have that
\begin{equation} 
a_{p}(E) = \epsilon_\ell(\Frob_p) a_p(E_D). \label{pf-main-twist}
\end{equation}
By Lemma \ref{non-isogenous-lem}, $E$ and $E_D$ are not $\Q$-isogenous. Thus there exists a prime number $p \nmid 2N_E$ 
such that
\begin{equation} a_{p}(E) \neq a_{p}(E_D).
\label{pf-main-neq}
\end{equation}

Let $p_0$ be the least prime number such that $p_0 \nmid 2N_E$ and the inequality (\ref{pf-main-neq}) holds. Applying Theorem \ref{thrm-qisog} to $E$ and $E_D$ and noting (\ref{pf-main-cond}), we find that
\[p_0\leq (482 \log \rad(2N_{E}) +2880)^2.
\]
Considering (\ref{pf-main-twist}) and (\ref{pf-main-neq}), we see that $\epsilon_\ell(\Frob_{p_0}) = -1$. Thus by Lemma \ref{lemma-epsilon}(\ref{lemma-epsilon-ap}), $\ell$ divides $a_{p_0}(E)$, and so $\ell \leq 
\abs{a_{p_0}(E)}$. The Hasse bound \cite[Theorem V.1.1]{Si2009} gives that $\abs{a_{p_0}(E)} \leq 2 \sqrt{p_0}$. Thus
\[ \ell \leq 2 \sqrt{p_0} \leq 2 (482 \log \rad(2N_{E}) +2880). \]
Expanding the right-hand side, one obtains the claimed bound.
\end{proof}

\subsection{An example}\label{ex-sec} We illustrate Theorem \ref{main-thrm} with a concrete example. Consider the elliptic curve $E/\Q$ with LMFDB \cite{LMFDB} label \href{https://www.lmfdb.org/EllipticCurve/Q/76204800/ut/1}{\texttt{76204800.ut1}}, given by the  Weierstrass equation
\[ y^2=x^3-198450x-27783000. \]
This elliptic curve is without complex multiplication and has conductor
\[ N_E = 76204800 =  2^8 \cdot 3^5 \cdot 5^2 \cdot 7^2. \]
Assuming the generalized Riemann hypothesis, Theorem \ref{main-thrm} tells us that
\[ 
c(E) \leq 964 \log(2\cdot3\cdot5\cdot7) + 5760\approx 10914.61.
\]
In about one second total, SageMath's \cite{SageMath} built-in \texttt{is\_surjective} command confirms that $\bar{\rho}_{E,\ell}$ is surjective for each prime number $\ell \leq 10915$. Thus,  conditional on GRH, $\bar{\rho}_{E,\ell}$ is surjective for all primes $\ell$. Calling Zywina's \texttt{ExceptionalSet} script  \cite{Zy2015b}  on $E$ confirms this unconditionally.

\section{Acknowledgments}

This paper emerged following a series of talks on the effective version of Serre's open image theorem in the Graduate Number Theory Seminar at the University of Illinois Chicago in Spring 2021. Thus, we are thankful to all of the members of the seminar. In addition, we thank Nathan Jones and Sung Min Lee for their helpful comments on an earlier draft of the paper. Further, we thank Jonathan Sorenson for his kind and valuable email response relating to Remark \ref{R:Table}. Lastly, we thank the referee for carefully reading our paper and providing several helpful comments.
\bibliographystyle{amsplain}
\bibliography{Ref}

\end{document}